\newcommand\Aoff{\RenewEnviron{commentA}{}}
\newtheorem{theorem}{Theorem}[section]
\newtheorem{proposition}[theorem]{Proposition}
\newtheorem{corollary}[theorem]{Corollary}
\newtheorem{lemma}[theorem]{Lemma}
\newtheorem{conjecture}[theorem]{Conjecture}
\theoremstyle{definition}
\newtheorem{definition}[theorem]{Definition}
\newtheorem{remark}[theorem]{Remark}
\newcommand{\R}{{\mathbb R}}
\newcommand{\Z}{{\mathbb Z}}
\newcommand{\Q}{{\mathbb Q}}
\newcommand{\C}{{\mathbb C}}
\newcommand{\on}[1]{\operatorname{#1}}
\newcommand{\supp}{\on{supp}}
\newcommand{\vol}{\operatorname{vol}}
\newcommand{\mld}{\operatorname{mld}}
\newcommand{\p}{\mathrm{p}}
\newcommand{\CW}[1]{{\textcolor{blue}{[CW: #1]}}}
\subjclass[2020]{14J40, 14J45 (primary); 14C20, 14E30, 14J17 (secondary)}
\title{Exceptional Fano varieties with small minimal log discrepancy}
\author{Louis Esser}
\author{Jihao Liu}
\author{Chengxi Wang}
\address{Department of Mathematics, Princeton University, Fine Hall, Washington Road, Princeton, NJ 08544-1000, USA}
\email{esserl@math.princeton.edu}
\address{Department of Mathematics, Northwestern University, 2033 Sheridan Rd, Evanston, IL 60208, USA}
\email{jliu@northwestern.edu}
\address{UCLA Mathematics Department, Box 951555, Los Angeles, CA 90095-1555, USA}
\email{chwang@math.ucla.edu}
\begin{document}

\begin{abstract}
We construct exceptional Fano varieties with the smallest known minimal log
discrepancies in all dimensions. These varieties are
well-formed hypersurfaces in weighted projective space. Their minimal log
discrepancies decay doubly exponentially with dimension, and achieve the 
optimal value in dimension $2$.
\end{abstract}

\maketitle

\tableofcontents

\section{Introduction}

We work over the field of complex numbers $\mathbb{C}$.

\noindent\textbf{Explicit birational geometry in high dimensions}. Explicit birational geometry, often referred to as the ``geography of birational geometry," studies the extreme values of algebraic invariants in birational geometry. Traditionally, the focus of explicit birational geometry has been on varieties of dimension at most $3$ with at worst canonical singularities. Recent research, however, has expanded this focus to include higher-dimensional varieties, those with singularities worse than canonical, and new invariants such as Tian's $\alpha$-invariant (also known as the global log canonical threshold in \cite{Totaro23}), minimal log discrepancies, and the value $N$ in the boundedness of $N$-complements.

A primary focus of recent studies is to construct examples with extreme invariants, either very small (when there is a positive lower bound) or very large (when 
there is an upper bound). Although these examples are not yet known to be optimal in higher dimensions, they provide essential insights that guide the setting of 
optimization goals, which is important even in lower dimensions. For instance, the surface examples constructed in \cite{AL19a} have been crucial for optimizations
in further studies \cite{LS23, LL23}. Thanks to the first author, the third author, B. Totaro, and others, numerous examples of varieties with extreme invariants 
have been established in arbitrary dimensions. These extreme values often show doubly exponential growth or decay with respect to the dimension of the ambient 
variety \cite{Ess24, ET23, ETW, ETW22, Totaro23, TW21, Wan23}. This paper continues this series of studies by focusing on the minimal log discrepancy of exceptional Fano
varieties.

\noindent\textbf{Explicit birational geometry for exceptional Fano varieties}. Exceptional Fano varieties are Fano varieties whose $\mathbb{R}$-complements are klt; equivalently, these are Fano varieties with $\alpha$-invariants strictly greater than 1 \cite{Bir21}. Birkar famously showed that exceptional Fano varieties form a bounded family \cite{Bir19}, which is one crucial step in his proof of the boundedness of $N$-complements and the BAB conjecture. Additionally, since exceptional Fano varieties are K-stable \cite{Tia87, OS12}, studying the explicit geometry of exceptional Fano varieties is greatly beneficial for the construction of K-moduli spaces.

In the context of explicit birational geometry, the following four invariants are crucial for characterizing an exceptional Fano variety $X$ of dimension $n$: the anti-canonical volume $\vol(-K_X)$, Tian's alpha invariant $\alpha(X)$, complement index (i.e. the smallest positive integer $N$ so that $N(K_X+B)\sim 0$ for some lc pair $(X,B)$), and the global minimal log discrepancy $\mld(X)$. Research on the explicit geometry of the first three invariants has been thorough:
\begin{itemize}
\item   The well-known explicit upper bound for $\vol(-K_X)$ is $(n+1)^n$ (cf. \cite[Theorem 3]{Liu18}), and there are known examples with $\vol(-K_X)$ doubly exponentially small; these examples are conjectured to attain the minimum \cite[Theorem 3.1, Conjecture 3.2]{Totaro23}.
\item  $\alpha(X)$ has a trivial lower bound of 1, and there are known examples where $\alpha(X)$ increases doubly exponentially with dimension \cite[Theorem 8.1, Question 8.2]{Totaro23}.
\item The complement index has a trivial lower bound of 1, and there are known examples where the complement index increases doubly exponetially with dimension \cite[Theorem 8.1, Question 8.2]{Totaro23}.
\end{itemize}
However, although the upper bound of $\mld(X)$ is trivially $1$, its explicit lower bound remains unclear. Even for surfaces, the optimal lower bound was proven only very recently by the second author and V. V. Shokurov \cite[Theorem 1.9]{LS23}. This paper addresses this gap in all dimensions by establishing an explicit expectation for the lower bound of $\mld(X)$.

\begin{theorem}\label{thm: main}
Let $s_n$ be the $n$th Sylvester number. Then there exists a sequence of exceptional Fano varieties $\{X_n\}_{n\geq 2}$, such that each $X_n$ is of dimension $n$,
$$\mld(X_n)=\frac{4(s_n-1)}{s_n^3-9s_n}$$
if $n$ is even, and
$$\mld(X_n)=\frac{4(s_n-3)}{s_n^3-19s_n+14}$$
if $n$ is odd. In particular, $\mld(X_n)$ is asymptotic 
to $4/s_n^2$, as $n \rightarrow \infty$, so that
this value converges to zero doubly exponentially
with $n$. 
\end{theorem}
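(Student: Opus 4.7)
The plan is to realize each $X_n$ as an explicit well-formed, quasi-smooth hypersurface in a weighted projective space $\mathbb{P}(a_0, \ldots, a_{n+1})$ whose weights are built from Sylvester numbers, following the template developed in \cite{ETW, ETW22, Ess24, Totaro23}. The guiding arithmetic fact is the Sylvester identity $\sum_i 1/s_i + 1/(s_n - 1) = 1$, which yields weights approximately equal to $d/s_i$ whose sum is nearly $d$ once $d$ is a suitable common multiple of the $s_i$ and of $s_n-1$. Choosing the Fano adjustment $\sum a_i - d > 0$ as small as integrality of the weights permits produces an anticanonical class of extremely small volume. The parity of $n$ then forces two slightly different integer normalizations of the largest weight (essentially $(s_n-1)/2$ versus $(s_n-3)/2$), and this is what drives the even/odd dichotomy in the stated formulas.

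Concretely, I would first fix the weights $a_i$ and the degree $d_n$, take a defining polynomial of ``chain'' type $x_0^{m_0}x_1 + x_1^{m_1}x_2 + \cdots + x_n^{m_n} + \lambda\cdot(\text{correction})$ adapted to the weights, and verify well-formedness and quasi-smoothness via Fletcher's criteria. Second, I would confirm $\sum a_i > d_n$ (Fano) and pin down the worst cyclic-quotient singular point $P\in X_n$, which in this kind of construction is the coordinate vertex of $\mathbb{P}(a_\bullet)$ carrying the largest weight. Third, I would compute $\mld(X_n)=\mld_P(X_n)$ via the standard weighted-blowup / cyclic-quotient formula together with the adjunction correction from the hypersurface; this reduces to arithmetic in Sylvester numbers and directly produces the two closed-form expressions, both of which are asymptotic to $4/s_n^2$.

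The genuinely hard step is verifying that $X_n$ is exceptional, i.e., $\alpha(X_n)>1$, since one needs a uniform lower bound on the log canonical threshold of \emph{every} effective $\mathbb{Q}$-divisor numerically proportional to $-K_{X_n}$, not merely the coordinate divisors. I would adapt the strategy of \cite{ETW, Totaro23}: reduce by inversion of adjunction on the coordinate hyperplanes to a finite list of candidate ``extremal'' prime divisors, and then use the fact that $\vol(-K_{X_n})$ is doubly-exponentially small (so multiplicities at the worst singular point are forced to be correspondingly tiny) to bound each candidate's log canonical threshold strictly above $1$. The exact shape of the defining polynomial, and the parity-dependent adjustment of the largest weight, are dictated by precisely what is required to make this bound go through; the mld computation and the Fano verification, by contrast, are essentially bookkeeping once the weights are chosen.
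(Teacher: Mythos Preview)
Your plan diverges from the paper at its most essential point: you propose quasi-smooth hypersurfaces and intend to compute the mld via the standard cyclic-quotient formula at a coordinate vertex. The paper's examples are deliberately \emph{not} quasi-smooth. The hypersurface $X_n$ has equation $x_0^2+x_1^3+\cdots+x_{n-1}^{s_{n-1}}+x_n^b x_{n+1}+x_1\cdots x_{n-1}x_n x_{n+1}^c=0$ (with $x_n^2$ in the odd case), and the unique non-quasismooth point---the $x_{n+1}$--vertex, which incidentally does \emph{not} carry the largest weight---is precisely where the small mld is attained. At all quasismooth points the paper shows $\mld\geq 2/a_n$, strictly larger than the value $(s_n-1)/(2a_{n+1})$ realized at the bad point. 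So a cyclic-quotient computation alone cannot produce the stated mld; the paper instead develops a formula (its Theorem~3.3) for the mld of a Newton non-degenerate hypersurface singularity in a toric quotient $\mathbb{A}^{n+1}/\mu_r$, via a toroidal resolution, and then carries out a careful minimization over lattice points to identify the divisor achieving the minimum.

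The exceptionality step also differs substantially from what you sketch. The paper does not reduce to a finite list of extremal divisors via inversion of adjunction on coordinate hyperplanes; rather, it passes to the weighted tangent cone $X^c$ of $X$ at the non-quasismooth point (with respect to carefully chosen weights $b_i$), invokes Totaro's cone lemma to reduce $(X,\nu D)$ being lc near that point to $(X^c,\nu D^c)$ being lc together with a nonpositivity condition $K_{X^c}+\nu D^c\sim_{\mathbb Q}\mathcal{O}(l)$ with $l\leq 0$, and then bounds multiplicities of $D^c$ via Johnson--Koll\'ar. The klt-ness of $X^c$ itself is checked by a Newton-polytope interior-point argument. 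Your outline, while plausible for the Fermat/chain quasi-smooth examples in the earlier literature, does not supply the two pieces that are genuinely new here: the mld machinery for the non-quasismooth point and the tangent-cone control of the $\alpha$-invariant at that same point.
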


The explicit construction of $X_n$ can be found in \Cref{thm:hypersurface} 
when $n$ is even and in \Cref{thm:hypersurface-odd} when $n$ is odd.
In each case, $X_n$ is a well-formed, but not quasismooth, hypersurface in a 
weighted projective space.
We conjecture that the values obtained in Theorem \ref{thm: main} 
are exactly the smallest mld's among all exceptional Fano varieties of the same dimension:

\begin{conjecture}\label{conj: smallest mld}
For any even (resp. odd) integer $n\geq 2$, the smallest minimal log discrepancy of an exceptional Fano variety of dimension $n$ is
$$\frac{4(s_n-1)}{s_n^3-9s_n}\left(\text{resp. }\frac{4(s_n-3)}{s_n^3-19s_n+14}\right).$$
\end{conjecture}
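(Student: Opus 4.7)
The strategy would combine the boundedness of exceptional Fano varieties \cite{Bir19} with an optimization over weighted projective hypersurfaces, in the spirit of the Sylvester-sequence constructions of \cite{Ess24, Totaro23}. By Birkar's boundedness, exceptional Fano varieties of dimension $n$ form a bounded family, so in principle one has a finite-dimensional moduli to search over. Since $\mld$ behaves constructibly on bounded families of klt pairs, minimizers exist and can, at least in principle, be identified.

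The first main step is to reduce to well-formed hypersurfaces in weighted projective space, as in \Cref{thm: main}. One approach is via degeneration: one hopes that an exceptional Fano $X$ realizing the minimum $\mld$ admits a degeneration to a normal Fano of the same dimension along which $\mld$ does not increase, and whose central fiber admits a presentation as a weighted hypersurface. An alternative is to bypass the global reduction and prove a universal local bound: at the point $p\in X$ where the global $\mld$ is achieved, one performs a crepant blow-up and analyzes the combinatorial data of the associated orbifold cone, showing directly that the exceptional condition forces this data to match (or be dominated by) the Sylvester datum.

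The second main step is the combinatorial optimization. For a well-formed hypersurface $X_d\subset \mathbb{P}(a_0,\ldots,a_n)$, the Fano condition reads $d<\sum a_i$, and the exceptional condition $\alpha(X)>1$ translates into strict inequalities among the $a_i$. Using the classical fact that among sequences of positive integers with $\sum 1/a_i<1$ the sum is maximized by the Sylvester sequence, one can argue that the weights of a minimizer must be forced to coincide (or nearly coincide) with $s_1,\ldots,s_n$. A careful local calculation of $\mld$ at the singular points along each coordinate axis, respecting well-formedness and the presence of non-quasismooth loci, should then yield the explicit formulas; the parity dichotomy between even and odd $n$ would emerge from the degree choice forced by the Fano condition, precisely as in the construction underlying \Cref{thm: main}.

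The main obstacle is the first step. Unlike in the K-stable setting, there is no variational principle forcing an $\mld$-minimizing Fano to be a weighted hypersurface, and the surface case \cite{LS23} proceeds through the explicit classification of log del Pezzo surfaces, a luxury unavailable in higher dimensions. A plausible path forward is to first establish the conjecture conditionally on $X$ being a weighted hypersurface, and then attack the general case by a joint induction on dimension and on MMP steps applied to suitable $\mathbb{R}$-complements of $X$, exploiting the fact that $\mld$ can only grow along such operations while the exceptional condition is preserved in controlled ways.
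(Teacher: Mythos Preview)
The statement you are addressing is a \emph{conjecture}, and the paper does not prove it. The paper's contribution is only the \emph{upper bound}: \Cref{thm: main} constructs, in each dimension $n\geq 2$, an exceptional Fano variety $X_n$ whose mld equals the stated value, so the smallest mld of an exceptional Fano $n$-fold is at most this number. The converse inequality---that no exceptional Fano variety of dimension $n$ has strictly smaller mld---is established only for $n=2$ via the classification-based work of \cite{LS23}, and the paper explicitly leaves it open for $n\geq 3$.

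Your proposal is therefore a speculative strategy rather than a proof, and you yourself name its central gap: there is no mechanism reducing the minimization to weighted hypersurfaces. Birkar's boundedness guarantees a minimizer exists, but neither of your workarounds has content. The degeneration idea requires a theorem producing degenerations that simultaneously preserve exceptionality and do not increase the mld; no such result is known. The ``universal local bound'' is even more tenuous: exceptionality is a \emph{global} constraint on all $\mathbb{R}$-complements, and there is no argument linking it to the orbifold-cone data at a single point tightly enough to force Sylvester combinatorics. Your second step is also over-optimistic: the paper's examples are \emph{not} quasismooth, their weights are not the Sylvester numbers themselves but the more intricate expressions in \Cref{thm:hypersurface} and \Cref{thm:hypersurface-odd}, and the mld is realized at a non-quasismooth point via the Newton non-degeneracy machinery of \Cref{thm:mldcomp}, not by an Egyptian-fraction maximization among the $a_i$. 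Finally, the MMP-based induction you float at the end does not obviously preserve exceptionality, so the inductive hypothesis would be lost immediately.
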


Conjecture \ref{conj: smallest mld} holds in dimension $2$ 
due to \cite[Theorem 1.9(2)]{LS23}, where the optimal value is $3/35$.

\noindent\textit{Idea of the proof of \Cref{thm: main}}. Similar to the construction of other extreme examples, the examples we construct in \Cref{thm: main} are well-formed hypersurfaces in weighted projective spaces.  The invariants central to some of those previous examples, such as anti-canonical volume and complement index, are easy to read off using the degree and weights. In contrast, there are two significant difficulties in proving \Cref{thm: main}: the computation of the minimal log discrepancy and the proof that the varieties are exceptional.
\begin{itemize}
\item \textbf{The computation of the minimal log discrepancy:} The singularities of a quasismooth hypersurface in a weighted projective space may be determined in a straightforward way from the degree and weights.  Since these are toric, the minimal log discrepancy is computable combinatorially (see \Cref{prop:toricmld}).  However, the computation of the mld at non-quasismooth points is not as straightforward, and we develop a method to resolve this issue. More precisely, we establish a formula for computing the minimal log discrepancies of hypersurfaces in toric varieties that satisfy certain properties, i.e., those defined by equations that are Newton non-degenerate (\Cref{thm:mldcomp}). This is essentially due to the fact that divisors computing the mlds in this case are obtained via toroidal resolutions. It is worth mentioning that we do not have similar formulas for arbitrary hypersurfaces. In our examples, we need to apply this result to the non-quasismooth point and also show that the resulting discrepancy value is indeed a global minimum. This involves lengthy but elementary arguments and is done in \Cref{hypersurfmld}.
\item \textbf{Proving that the varieties are exceptional:} The difficulty here is that there is no straightforward theoretical formula to control the lower bound of the $\alpha$-invariant, so the explicit structure of the variety needs to be used. A natural idea here is to use similar arguments as the weighted cone construction in \cite[Section 4]{Totaro23}. At the end of the day, this idea works, and the proof is completed in Theorems \ref{thm: exceptionality}
and \ref{thm: exceptionality-odd}. However, our arguments are more complicated than those in \cite{Totaro23}. This is because the hypersurfaces in \cite{Totaro23} have better inductive structure than ours, so we need to have more accurate control over the multiplicities at every point that may attain the $\alpha$-invariant.
\end{itemize}

Finally, we briefly mention here how we found the exceptional Fano varieties with small mlds. One key idea is that the exceptional del Pezzo surface with the smallest mld is already known \cite[Table 8]{LS23}. Although this surface was not created via a weighted hypersurface construction in \cite{LS23}, it has a very small volume \cite[Table 8]{LS23} and its singularities are explicitly characterized. Therefore, we can perform a thorough search for hypersurfaces $X$ in weighted projective surfaces and determine that
$$X_{282} \coloneqq \{x_0^2+x_1^3+x_2^{19}x_3+x_1x_2x_3^5=0\}\subset \mathbb{P}^3(141,94,13,35),$$
is the desired surface. This is the starting point of the construction of examples. The higher-dimensional examples are constructed by carefully comparing examples with those in previous literature and choosing weights.

\noindent\textit{Sketch of the paper}. In \Cref{sec: preliminaries}, we recall some preliminary definitions relating to subvarieties of weighted
projective space and singularities in birational geometry. In \Cref{sec: ld formula}, we establish a formula for the computation of minimal log discrepancies of hypersurface singularities in weighted projective spaces. In \Cref{sec: well-formed}, we construct the examples and prove their basic properties. In \Cref{sec: compute mld}, we compute the minimal log discrepancies of the examples and show that they are asymptotically proportional to $4/s_n^2$. In \Cref{sec: compute alpha}, we show that the examples we constructed are exceptional Fano, hence concluding the proof of \Cref{thm: main}.

\noindent\textbf{Acknowledgement}. The authors would like to thank Burt Totaro for useful discussions and comments.

\section{Preliminaries}\label{sec: preliminaries}

This section will introduce the necessary background on
subvarieties in weighted projective space and the concepts
from birational geometry that we use in the paper.

\subsection{Weighted Projective Varieties}

Let $c_0,\ldots,c_n$ be positive integers.
The \textit{weighted
projective space} $Y=\mathbb{P}(c_0,\ldots,c_n)$ is defined to be the quotient
variety $(\mathbb{A}^{n+1} \setminus \{0\})/\mathbb{G}_{\on{m}}$ over $\C$, 
where the multiplicative group $\mathbb{G}_{\on{m}}$ acts by
$t \cdot (x_0,\ldots,x_n)=(t^{c_0}x_0,\ldots,t^{c_n}x_n)$
\cite[section 6]{Iano-Fletcher}.
We can view
the weighted projective space $Y$ as the quotient stack
$\mathcal{Y}=[(\mathbb{A}^{n+1} \setminus \{0\})/\mathbb{G}_{\on{m}}]$; it
is a smooth Deligne-Mumford stack. 
%with canonical class
%$K_{\mathcal{Y}}=O_{\mathcal{Y}}(-\sum c_j)$.
We say that $Y$ is \textit{well-formed} if
the stack $\mathcal{Y}$ has trivial stabilizer in codimension 1,
or equivalently if $\gcd(c_0,\ldots,\widehat{c_j},\ldots,c_n)=1$
for each $j$. In the well-formed case, the canonical class
of the variety $Y$ is $K_Y=\mathcal{O}_Y(-\sum c_j)$.

When $Y$ is well-formed, 
we denote by $\mathcal{O}_Y(1)$ the sheaf associated with the standard
graded module shifted by $1$ over the graded coordinate ring of 
the weighted projective space $Y$ \cite[Tag 01M3]{StacksProject}.
It is
a reflexive sheaf associated to a Weil divisor. The divisor class 
$\mathcal{O}_Y(m)$ is Cartier if and only if $m$ is a multiple
of every weight $c_j$. We can also view $\mathcal{O}(1)$
as a line bundle on the stack $\mathcal{Y}$.
The intersection number 
$\int_{\mathcal{Y}} c_1(\mathcal{O}(1))^n$
is $1/(c_0\cdots c_n)$.
The \textit{degree} for an integral closed substack $T$ of dimension $r$
in $\mathcal{Y}$ means $\int_T c_1(\mathcal{O}(1))^r$.

Note that if $X$ is a hypersurface in $\mathcal{Y}$ defined by a weighted-homogeneous polynomial
of degree $d$,
then its degree as a substack of $\mathcal{Y}$ is $X\cdot 
c_1(\mathcal{O}(1))^{n-1}=
d/(c_0\cdots c_n)$.  

We say that the subvariety $X$ of $Y$
is \textit{well-formed} if $Y$ is well-formed
and $X \cap \mathrm{Sing}(Y)$ has codimension at least $2$ in $X$.
The subvariety $X$ is \textit{quasismooth} if the
affine cone over $X$ is smooth, where the affine cone is
the preimage of $X$ in $\mathbb{A}^{n+1} \setminus \{0\}$.

\subsection{Singularities in Birational Geometry}
The minimal log discrepancy, or mld, of a variety (or pair)
is a numerical measure of
its singularities.  We'll primarily be concerned with varieties in this
paper, but will state the definition for a pair $(X,D)$. 

\begin{definition}
Let $X$ be a normal complex variety and $D$ an effective $\Q$-divisor
on $X$ with the property that $K_X + D$ is $\Q$-Cartier.  Given a 
proper birational morphism $\mu: X' \rightarrow X$ from another 
normal variety $X$ and $E \subset X'$ any irreducible divisor,
the \textit{log discrepancy} of $E$ with respect to the morphism
$\mu$ is defined as:
$$a_E(X,D) \coloneqq \mathrm{ord}_E(K_{X'} + E - \mu^*(K_X+D)).$$
This definition turns out to only depend on the valuation defined
by $E$ on the function field of $X$, and not on the specific
choice of $\mu$ extracting this divisor.  The \textit{center} $c_X(E)$
of $E$ is $\mu(E) \subset X$. 
For any point $x$ of the scheme $X$, we define
$$\mld(X,D,x) \coloneqq \mathrm{inf}
\{a_E(X,D):c_X(E) = \bar{x}\},$$
where the infimum runs over all divisors $E$ on proper birational
models of $X$ with the indicated property.  Finally, the 
(global) minimal log discrepancy of the variety $X$ is
$$\mld(X,D) \coloneqq \mathrm{inf}_{x \in X} \mld(X,D,x).$$
where the infimum runs through all codimension $\geq 1$ points $x$ on $X$.
\end{definition}

Various classes of singularities in birational geometry can be 
defined using the minimal log discrepancy.

\begin{definition}
The pair $(X,D)$ as above is \textit{log canonical (lc)}  if 
$\mld(X,D) \geq 0$.  It is \textit{Kawamata log terminal (klt)}
if $\mld(X,D) > 0$.
\end{definition}

When a pair $(X,D)$ is log canonical, we may compute the mld
using only the finitely many irreducible divisors appearing in
a single log resolution of singularities of $(X,D)$; in particular,
it is rational (see, e.g., \cite{Kollar2013}).

\begin{definition}
A klt Fano variety $X$ is \textit{exceptional} if for every effective
$\mathbb{Q}$-divisor $D$ that is $\mathbb{Q}$-linearly equivalent to $-K_X$,
the pair $(X,D)$ is klt. 

Let $X$ be a klt Fano variety. The \textit{lc threshold} of an effective
$\mathbb{Q}$-Cartier $\mathbb{Q}$-divisor $D$ on $X$ is defined to be
\[\mathrm{lct}(X,D):=\mathrm{sup}\{t\in \mathbb{Q}:(X,tD) \text{ is lc}\}.\] 
Let $|-K_X|_{\mathbb{Q}} \coloneqq \{D \text{ is effective}| D\sim_{\mathbb{Q}}-K_X\}$
be the $\mathbb{Q}$-linear 
system of $-K_X$. The \textit{$\alpha$-invariant} (also called 
the global log canonical threshold) of $X$ is defined to be 
\[\alpha(X):=\mathrm{inf}\{\mathrm{lct}(X,D):D\in|-K_X|_{\mathbb{Q}}\}\] 
which coincides with \[\mathrm{sup}\{t\in \mathbb{Q}:(X,tD)
\text{ is lc for all } D\in |-K_X|_{\mathbb{Q}}\}.\]
\end{definition}

So the $\alpha$-invariant in this paper is same as the global log canonical 
threshold in \cite{Totaro23}, but different from the global log canonical
threshold in \cite{LS23}. From the definitions, we see that 
$\alpha$-invariant $\alpha(X)>1$ implies $X$ is exceptional.
The converse holds by \cite[Theorem 1.7]{Bir21}.

\section{Log Discrepancies of Hypersurface Singularities}\label{sec: ld formula}

In this section, we develop the tools needed to compute
minimal log discrepancies later in the paper.  The main result
is a method of computing the minimal log discrepancy of 
the quotient of a hypersurface in affine space by a finite
subgroup of the torus; this method requires that
the defining equation for the hypersurface satisfy a 
Newton non-degeneracy condition.

First, we recall a simple formula for the minimal log discrepancy of
a cyclic quotient singularity, which
follows from the toric description of the
singularity (see, e.g., \cite[Section 2]{Ambro}):

\begin{proposition}
\label{prop:toricmld}
Let $X:=\frac{1}{r}(b_1,\ldots,b_s)$ denote the quotient
$\mathbb{A}^s/\mu_r$, where $\mu_r$ acts on $\mathbb{A}^s$ with
the indicated weights.  Suppose that the singularity is well-formed
in the sense that $\gcd(r,b_1,\ldots,\widehat{b_i},\ldots,b_s)$ $= 1$ for each $1 \leq i \leq s$.
Then the minimal log discrepancy of $\frac{1}{r}(b_1,\ldots,b_s)$ is given by
\begin{equation}
\label{toricmldeq}
    \mld(X) = \min\left\{1 ,\min_{1 \leq j < r} \sum_{i = 1}^s 
    \left( \frac{jb_i}{r} - \bigg\lfloor \frac{jb_i}{r}\bigg\rfloor\right)\right\}.
\end{equation}
\end{proposition}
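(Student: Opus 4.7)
The plan is to exploit the toric structure of the cyclic quotient singularity. First I would identify $X = \frac{1}{r}(b_1,\ldots,b_s)$ with the affine toric variety whose fan consists of the single cone $\sigma = \R_{\geq 0}^s$ in the lattice $N = \Z^s + \Z \cdot \tfrac{1}{r}(b_1,\ldots,b_s)$. The well-formedness hypothesis $\gcd(r, b_1, \ldots, \widehat{b_i}, \ldots, b_s) = 1$ is precisely what guarantees that each standard basis vector $e_i$ remains primitive in $N$: a shorter lattice point on the ray through $e_i$ would require some $0 < j < r$ with $r \mid jb_k$ for every $k \neq i$, and combining these divisibilities with the well-formedness forces $r \mid j$, a contradiction. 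Hence the primitive ray generators of $\sigma$ with respect to $N$ are exactly $e_1,\ldots,e_s$, the corresponding torus-invariant prime divisors $D_1,\ldots,D_s$ form the toric boundary, and $K_X = -\sum_i D_i$.

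Next I would invoke the standard toric dictionary for log discrepancies: every divisorial valuation on $X$ is of the form $E_v$ for a nonzero primitive lattice point $v \in N \cap \sigma$, extracted via a suitable refinement of the fan, and writing $v = \sum_i c_i e_i$ yields $a_{E_v}(X) = \sum_{i=1}^s c_i$. Thus $\mld(X)$ equals the infimum of $\sum_i c_i$ over all such $v$. Since scaling a primitive lattice vector by an integer $\geq 2$ strictly increases this sum, the infimum may equivalently be taken over all nonzero lattice points in $N \cap \sigma$.

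The final step is a clean enumeration. Because $N/\Z^s \cong \Z/r\Z$ is generated by the class of $\tfrac{1}{r}(b_1,\ldots,b_s)$, every $v \in N$ admits a unique representation
\[
v = \tfrac{j}{r}(b_1,\ldots,b_s) + (m_1,\ldots,m_s), \qquad 0 \leq j < r, \ m_i \in \Z,
\]
and $v \in \sigma$ is equivalent to $m_i \geq -\lfloor jb_i/r\rfloor$ for each $i$. Since $\sum_i(jb_i/r + m_i)$ is monotone in each $m_i$, the minimum within the coset indexed by $j$ is attained at $m_i = -\lfloor jb_i/r\rfloor$ and equals $\sum_i (jb_i/r - \lfloor jb_i/r\rfloor)$. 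For $j = 0$, the best nonzero lattice point in $\sigma \cap \Z^s$ is any $e_i$, giving contribution $1$. Taking the minimum across $j = 0, 1, \ldots, r - 1$ recovers exactly formula (\ref{toricmldeq}).

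The argument is largely bookkeeping once the toric framework is in place, so the only real subtlety is the primitivity of the $e_i$, which the well-formedness hypothesis delivers; there is no substantial obstacle.
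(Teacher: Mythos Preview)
Your approach is essentially the same as the paper's: both identify $X$ with the affine toric variety for the standard orthant $\sigma$ in $N = \Z^s + \Z\cdot\tfrac{1}{r}(b_1,\ldots,b_s)$ and reduce the mld computation to minimizing the sum-of-coordinates functional over $N \cap \sigma \setminus \{0\}$. Your coset enumeration and your check that the $e_i$ stay primitive are more explicit than the paper, which simply cites Ambro for the key fact and then restricts to the unit cube.

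There is, however, one misstatement you should fix. It is \emph{not} true that ``every divisorial valuation on $X$ is of the form $E_v$ for a nonzero primitive lattice point $v \in N \cap \sigma$''; non-toric valuations abound (blow up any closed point not lying on a torus-invariant stratum, for instance). What is true---and what both you and the paper actually need---is that the mld is already attained among the toric valuations. The clean justification is that $X$ admits a toric log resolution (any smooth refinement of the fan), and for an lc variety the mld is computed among the finitely many divisors on any single log resolution. Once you replace the false universal claim by this observation, the rest of your argument goes through verbatim.
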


Note that this is the global mld of this quotient indicated, not the mld
of the point $0 \in \mathbb{A}^s/\mu_r$.

\begin{proof}
Since the quotient singularity is well-formed, no divisor in $\mathbb{A}^s$
is fixed by a nontrivial subgroup of $\mu_r$; hence the pullback of the 
canonical divisor by the quotient morphism is again the canonical divisor.
The quotient $\mathbb{A}^s/\mu_r$
is the affine toric variety corresponding to the cone $\sigma$ 
generated by the standard basis vectors in $\R^s$, with 
toric lattice $N = \Z^s+ \Z \cdot \frac{1}{r}(b_1,\ldots,b_s)$. 
The global minimal log discrepancy of $\mathbb{A}^s/\mu_r$ is therefore
the minimum of the sum-of-coordinates functional over all points in 
$N \cap \sigma \setminus \{0\}$ \cite[Section 2-A(f)]{Ambro2}.  
Actually, it suffices to take the minimum over lattice points 
in the unit cube $[0,1]^s$ (excluding the origin).

As $j$ runs from $1$ to $r-1$, the point with coordinates 
$$\frac{jb_i}{r} - \bigg\lfloor \frac{jb_i}{r} \bigg\rfloor$$
varies over 
every non-integral point in this unit cube.  
The minimum is taken with $1$ to account for discrepancies of toric
divisors on the original variety $\mathbb{A}^s/\mu_r$, 
which correspond to the standard basis points.
\end{proof}

Next, we develop a method for computing the minimal log discrepancies
of certain hypersurfaces in toric varieties. 
The idea is to refine the fan of the ambient
toric variety in such a way that the strict transform of the hypersurface
is smooth. The method of resolving hypersurfaces in this way
is well-studied and dates back at least to work of Varchenko
\cite{Varchenko} in the case of hypersurfaces of
affine space.  This procedure
works in much greater generality, however, and has been extended to
subvarieties defined by suitable ideals in any affine normal toric 
variety \cite{AGM}.  We will be concerned with hypersurfaces in 
the toric variety $\mathbb{A}^{n+1}/G$, where $G$ is a finite
cyclic subgroup of the torus acting
freely in codimension $1$.

Suppose that $Y \coloneqq \{f = 0\} \subset \mathbb{A}^{n+1}$ is a hypersurface
passing through the origin and $G \coloneqq \mu_r$ is a finite cyclic
group which acts by 
$\zeta \cdot (x_0,\ldots,x_n) = 
(\zeta^{a_0} x_0,\ldots,\zeta^{a_n} x_n)$,
for any $\zeta\in \mu_r$.  Assume further that the action of $G$
is free in codimension $1$ and preserves $Y$.

The toric variety $\mathbb{A}^{n+1}/G$ corresponds to the fan $\Delta$
in the lattice $N = \Z^{n+1} + \Z \cdot \frac{1}{r}(a_0,\ldots,a_n)$
whose cones are the positive orthant $\sigma$ in 
$N_{\R} \coloneqq N \otimes_{\Z} \R$ and all its faces.  Let $M$ be the 
dual lattice to $N$.  Then we may write $f = \sum_{m \in M}c_m x^m$ (each
$m \in M$ corresponds to a $G$-invariant monomial in $x_0,\ldots,x_n$).
Denote by $\supp(f)$ the finite set of $m \in M$ for which $c_m \neq 0$.

Our methods will apply to hypersurfaces defined by suitably general
polynomials $f$; the correct notion is Newton non-degeneracy
(cf. \cite[Definition 4.4.22]{Ishii}, \cite[Definition 1.2]{AGM}):

\begin{definition}\label{define-Newtonpolytope}
The \textit{Newton polytope} $\Gamma_+(f)$ of $f$ in 
$M_{\R} \coloneqq M \otimes_{\Z} \R$ is defined to be 
$$\Gamma_+(f) \coloneqq \on{conv}\left\{\bigcup_{m \in \supp(f)} (m +
\R_{\geq 0}^{n+1}) \right\}.$$
We say that $f$ is \textit{Newton non-degenerate at $0$}
if for every compact face 
$\gamma$ of $\Gamma_+(f)$, the polynomial $f_{\gamma} \coloneqq \sum_{m \in \gamma} 
c_m x^m$ satisfies $\mathrm{Sing}(V(f_{\gamma})) \cap T = \emptyset$,
where $T$ is the open torus defined by $x_0 \cdots x_n \neq 0$.
(We note that the definitions of Newton polytope and Newton non-degeneracy
do not depend on $G$, but only on the polynomial $f$.)
\end{definition}

Given a point $\beta \in N$ and a polynomial $f \in \C[M \cap \sigma^{\vee}]$, we
write $\beta(f) \coloneqq \min\{\beta(m): m \in \supp(f)\}$, where
$\beta(m)$ denotes the inner product. 
Now we can state the main result of this subsection:

\begin{theorem}
\label{thm:mldcomp}
Let $(Y,0) \coloneqq \{f = 0\} \subset \mathbb{A}^{n+1}$ be a normal hypersurface
singularity and $G \coloneqq \mu_r$ a cyclic subgroup
of the torus in $\mathbb{A}^{n+1}$ acting freely 
in codimension $1$.  Assume that $f$ is Newton 
non-degenerate at $0$, set $(X,x) \coloneqq (Y,0)/G$, and suppose
$(X,x)$ is well-formed in the sense that
$\mathrm{Sing}(\mathbb{A}^{n+1}/G) \cap X$ has codimension at least $2$ in $X$.  Then 
$$\mld(X) = \inf_{\beta \in (N \cap |\Delta|) \setminus \{0\}} [\beta(x_0 \cdots x_n) - \beta(f)].$$
\end{theorem}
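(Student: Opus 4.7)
The plan is to compute $\mld(X)$ by constructing an explicit toroidal log resolution of $(V,X)$, where $V = \mathbb{A}^{n+1}/G$, and reading off the log discrepancy of every resulting divisor using toric adjunction. Using the Newton non-degeneracy of $f$, I would first pass to a regular refinement $\Delta'$ of $\Delta$ (giving a smooth toric blow-up $\pi \colon \widetilde V \to V$) with the additional property that on each cone of $\Delta'$ the piecewise linear function $\beta\mapsto \beta(f)$ is linear. Such a $\Delta'$ is built by first subdividing $\Delta$ along the inner normal fan of the Newton polytope $\Gamma_+(f)$ and then passing to a further regular refinement via standard toric combinatorics. Under this choice, the strict transform $\widetilde X \subset \widetilde V$ is smooth and meets the toric boundary transversally: on the affine chart of $\widetilde V$ associated with a cone $\tau\in\Delta'$, the equation of $\widetilde X$ is, after factoring out the appropriate monomial in the exceptional coordinates, the face polynomial $f_\gamma$ attached to the initial face $\gamma$ of $\Gamma_+(f)$ selected by $\tau$, and Newton non-degeneracy says precisely that $\mathrm{Sing}(V(f_\gamma))\cap T = \emptyset$. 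This is the content of Varchenko's theorem and its extension to toric ambient spaces in \cite{AGM}.

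Next, for each primitive generator $\beta$ of a ray of $\Delta'$, let $E_\beta$ be the corresponding toric divisor on $\widetilde V$ and set $F_\beta := E_\beta \cap \widetilde X$. Standard toric calculations give
\[
K_{\widetilde V} = \pi^* K_V + \sum_\beta \bigl(\beta(x_0\cdots x_n) - 1\bigr) E_\beta, \qquad \pi^* X = \widetilde X + \sum_\beta \beta(f)\, E_\beta,
\]
where the first identity uses $a_{E_\beta}(V) = \beta(x_0\cdots x_n)$ (since $x_0\cdots x_n$ corresponds to $(1,\dots,1)\in M$), and the second uses that $\beta(f) = \min\{\beta(m):m\in\supp(f)\}$ is precisely the order of vanishing of $f$ along $E_\beta$ by our choice of $\Delta'$. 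Adding the two identities and applying adjunction $K_{\widetilde X} = (K_{\widetilde V}+\widetilde X)|_{\widetilde X}$ yields $a_{F_\beta}(X) = \beta(x_0\cdots x_n) - \beta(f)$.

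Since $\widetilde X \to X$ is a log resolution, $\mld(X)$ equals the minimum of $a_{F_\beta}(X)$ over the primitive rays $\beta$ of $\Delta'$. The function $\beta \mapsto \beta(x_0\cdots x_n) - \beta(f)$ is piecewise linear on $\sigma$ with breaks along the inner normal fan of $\Gamma_+(f)$, so this minimum is unchanged by any further refinement and therefore coincides with the infimum of the same functional over all primitive $\beta\in N\cap\sigma\setminus\{0\}$; the infimum over all nonzero lattice points agrees with that over primitive ones once $X$ is log canonical, so the functional is nonnegative. The main obstacle in this plan is Step 1: verifying that Newton non-degeneracy really does force the strict transform $\widetilde X$ to be smooth and transversal to the toric boundary of $\widetilde V$. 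This reduces to a local identification, on each chart of $\widetilde V$, of the defining equation of $\widetilde X$ with the face polynomial $f_\gamma$ up to units, and then an appeal to $\mathrm{Sing}(V(f_\gamma))\cap T = \emptyset$; once this is in hand, the adjunction bookkeeping above yields the claimed formula.
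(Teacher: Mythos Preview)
Your approach is essentially the same as the paper's: build a toroidal log resolution by refining $\Delta$ along the inner normal fan of $\Gamma_+(f)$ and then regularly (citing \cite{AGM} for smoothness and transversality of the strict transform), compute the toric discrepancies $K_{\widetilde V}-\pi^*K_V$ and $\pi^*X-\widetilde X$, and restrict via adjunction to obtain $a_{F_\beta}(X)=\beta(x_0\cdots x_n)-\beta(f)$.

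Two small points deserve a sentence each in a final write-up. First, the adjunction $(K_V+X)|_X=K_X$ on the singular ambient $V=\mathbb{A}^{n+1}/G$ is exactly where the well-formedness hypothesis is used; you invoke adjunction on $\widetilde V$ but not on $V$, so make this explicit. Second, your sentence ``$\mld(X)$ equals the minimum of $a_{F_\beta}(X)$ over the primitive rays of $\Delta'$'' is only valid once $X$ is lc; in the non-lc case the minimum over a single resolution is a finite negative number while $\mld(X)=-\infty$. The paper handles this by observing that if $\beta(x_0\cdots x_n)-\beta(f)<0$ for some $\beta$, then scaling $\beta$ by positive integers drives the functional to $-\infty$, matching the convention $\mld=-\infty$. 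Your piecewise-linearity remark implicitly assumes nonnegativity (otherwise the minimum over rays of a regular cone need not bound the values at interior lattice points from below), so separate out the non-lc case first.
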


Note that this result computes the global mld of $X$, not just the mld at
$x$.  Moreover, the singularity need not be isolated, though we require
that the origin $0$ be contained in the hypersurface
 $\{f = 0\} \subset \mathbb{A}^{n+1}$.
Before proving this result, we state a quick corollary, which gives
an easy way to check that $X$ is klt 
(cf. \cite[Proposition 2.9]{IshiiProkhorov}).

\begin{corollary}
\label{cor:klt_hypersurf}
Let $(X,x)$ be a singularity satisfying the conditions of \Cref{thm:mldcomp}.
If $(1,\ldots,1)$ is contained in the interior of the Newton polytope
$\Gamma_+(f)$, then $X$ is klt.
\end{corollary}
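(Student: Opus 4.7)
The plan is to apply \Cref{thm:mldcomp} directly: since $X$ is klt exactly when $\mld(X) > 0$, and since $\beta(x_0 \cdots x_n)$ means $\beta((1,\ldots,1))$, it suffices to show that
$$\inf_{\beta \in (N \cap |\Delta|) \setminus \{0\}} \bigl[\beta((1,\ldots,1)) - \beta(f)\bigr] > 0.$$
I would approach this in two steps: first establishing that each term is strictly positive (using the interior hypothesis on $(1,\ldots,1)$), and then using the lattice structure to bound the terms uniformly away from zero.

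For the pointwise step, fix $\beta \in \sigma \setminus \{0\}$, where $\sigma = |\Delta|$ is the positive orthant in $N_{\R}$. The Newton polytope $\Gamma_+(f)$ contains $m + \R_{\geq 0}^{n+1}$ for any $m \in \supp(f)$, so because $\beta$ has some strictly positive coordinate, $\beta$ is unbounded above on $\Gamma_+(f)$. In particular $\beta$ is non-constant on $\Gamma_+(f)$, so its minimum $\beta(f) = \min_{q \in \Gamma_+(f)} \beta(q)$ is attained on a \emph{proper} face. Since $(1,\ldots,1)$ lies in the interior of $\Gamma_+(f)$ by hypothesis, it cannot lie on any proper face, so $\beta((1,\ldots,1)) > \beta(f)$.

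To upgrade this to a uniform bound I would use integrality. From the description $N = \Z^{n+1} + \Z \cdot \tfrac{1}{r}(a_0,\ldots,a_n)$, every $\beta \in N$ satisfies $r\beta \in \Z^{n+1}$. Since $(1,\ldots,1)$ and every $m \in \supp(f) \subset M \subset \Z^{n+1}$ have integer entries, each quantity $r[\beta((1,\ldots,1)) - \beta(m)]$ is an integer; taking the minimum over the finite set $\supp(f)$ shows that $r[\beta((1,\ldots,1)) - \beta(f)]$ is also an integer. By the previous paragraph this integer is strictly positive, so it is at least $1$. Therefore every term in the infimum is at least $1/r$, giving $\mld(X) \geq 1/r > 0$ and showing that $X$ is klt.

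The main subtlety is precisely this two-step structure: the interior condition gives strict inequality term-by-term, but that alone does not control an infimum over an infinite set; the denominator bound coming from the order $r$ of the group $G$ is what turns pointwise positivity into a genuine positive lower bound on $\mld(X)$.
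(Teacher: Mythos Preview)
Your proof is correct, and the first step (pointwise positivity via the interior condition) is exactly the argument the paper gives. The paper, however, stops there: it simply observes that $\beta(x_0\cdots x_n)-\beta(f)>0$ for every $\beta$ and invokes \Cref{thm:mldcomp}. This is legitimate because the proof of \Cref{thm:mldcomp} already shows that, in the lc case, the infimum is actually a \emph{minimum} (it is computed on the finitely many exceptional divisors of a single toric log resolution), so term-by-term positivity forces $\mld(X)>0$. Your second step, the integrality bound $\mld(X)\ge 1/r$, is a genuinely different way of closing this gap: it avoids appealing back to the proof of \Cref{thm:mldcomp} and in exchange yields an explicit quantitative lower bound. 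Either route is fine; yours is more self-contained, while the paper's is shorter once one remembers that the infimum in \Cref{thm:mldcomp} is really a minimum.
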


\begin{proof}
Let $\beta \in (N \cap |\Delta|) \setminus \{0\}$ be a lattice point.
Notice that $\beta(x_0 \cdots x_n)$ is the same as the value of the
functional $\beta: M_{\R} \rightarrow \R$ at the point $(1,\ldots,1)$.
By assumption, $\beta$ has all nonnegative coordinates in $N_{\R}$, so
the minimum value of $\beta$ on the Newton polytope $\Gamma_+(f)$
must be attained by some monomial $m$ of the equation $f$.
Because $(1,\ldots,1)$ is in the interior, we must have that
$\beta(x_0 \cdots x_n) - \beta(m)$ is positive.  Therefore,
$\beta(x_0 \cdots x_n) - \beta(f)$ is positive.  Since this holds
for each $\beta$, it follows from \Cref{thm:mldcomp} that $X$
is klt.
\end{proof}

\begin{proof}[Proof of \Cref{thm:mldcomp}]
Since $f$ is Newton non-degenerate, it follows from \cite[Theorem 1.3]{AGM}
that there exists a refinement $\Delta'$ of the fan $\Delta$, corresponding to a 
toric birational morphism 
$\pi_{\Delta'}: V' \rightarrow V \coloneqq \mathbb{A}^{n+1}/G$, 
such that $V'$ is smooth and the irreducible 
components of the total transform $\pi_{\Delta'}^{-1}(X)$ are smooth and meet
transversely.  In particular, the strict transform $\tilde{X}$ of $X$ is 
smooth. More specifically, 
the fan $\Delta'$ can be any refinement of the dual fan $\Sigma$ to
$\Gamma_+(f)$ for which the corresponding toric variety is smooth.
This dual fan $\Sigma$ is called the \textit{Groebner fan} in \cite{AGM} (cf. 
\cite[Theorem 4.4.23]{Ishii}).  In particular, since any two
refinements have a further common refinement, we can arrange that
$\Delta'$ also refines any other subdivision of $\Delta$.

Since $\tilde{X}$ is a resolution of singularities of $X$, when
$X$ is lc, we can compute its mld using only exceptional divisors
on $\tilde{X}$. In particular, in this case we only need to 
check the log discrepancies of exceptional divisors that arise from toric
modifications of $\mathbb{A}^{n+1}/G$.  Fix a primitive element $\beta$
in $(N \cap |\Delta|) \setminus \{0\}$,
the ray through which we may assume is in $\Delta'$.
We may refine $\Delta$ with a star-shaped subdivision $\Delta(\beta)$
centered at the ray through $\beta$; this gives a birational morphism 
$\pi_{\beta}: V_{\beta} \rightarrow V$ extracting a
unique exceptional divisor $E_{\beta}$.
Choosing $\Delta'$ to also refine $\Delta(\beta)$, we have in our setup
successive refinements
$\Delta \subset \Delta(\beta) \subset \Delta'$ and corresponding 
birational morphisms $V' \rightarrow V_{\beta} \rightarrow V$.

Denote by $X_{\beta}$ the strict transform of $X$ in $V_{\beta}$.
Then \cite[Proposition 8.3.11]{Ishii} shows that 
$$K_{V_{\beta}} = \pi_{\beta}^*(K_V) + (\beta(x_0 \cdots x_n) - 1)E_{\beta}$$
and
$$X_{\beta} = \pi_{\beta}^* X - {\beta}(f)E_{\beta}.$$
Combining these two equations gives that 
$$K_{V_{\beta}} + X_{\beta} + E_{\beta} 
= \pi_{\beta}^*(K_V + X) + (\beta(x_0 \cdots x_n) - \beta(f))E_{\beta}.$$
Passing to the full refinement $\Delta'$, a very similar equation still
holds in a neighborhood of the generic point of
the strict transform $\tilde{E}_{\beta}$ of $E_{\beta}$:
$$K_{V'} + \tilde{X} + \tilde{E}_{\beta} = \pi^*_{\Delta'}(K_V + X) + 
(\beta(x_0 \cdots x_n) - \beta(f))\tilde{E}_{\beta}.$$
Since $X$ is well-formed and normal, the adjunction formulas $(K_V + X)|_X = 
K_X$ and $(K_{V'} + \tilde{X})|_{\tilde{X}} = K_{\tilde{X}}$ hold. 
Indeed, the canonical class of a normal variety 
is well-defined as a Weil divisor up to linear equivalence,
so we can remove the singular locus from $X$,
which has codimension at least $2$ in $X$ by the 
well-formedness assumptions,
and observe that the equalities of divisors hold on the smooth locus.
Thus, after restricting to $\tilde{X}$, we obtain that the log discrepancy of
each component of $\tilde{E}_{\beta} \cap \tilde{X}$
(which is an exceptional divisor of $\tilde{X} \rightarrow X$, as 
$\tilde{E}_{\beta}$
intersects transversely with $\tilde{X}$)
is $\beta(x_0 \cdots x_n) - \beta(f)$.  

Suppose $\beta(x_0 \cdots x_n) - \beta(f)$ is negative for some 
$\beta$.  Since this is the log discrepancy of some divisor
over $X$, it follows that $X$ is non-lc, so that the 
minimal log discrepancy is $-\infty$ 
(see, e.g., \cite[Corollary 2.31]{KM}).  This agrees with the 
infimum in \Cref{thm:mldcomp}, for we may replace $\beta$ with
positive multiples to make the difference
$\beta(x_0 \cdots x_n) - \beta(f)$ arbitrarily negative.
Otherwise, the infimum is a minimum and is 
nonnegative, so in particular all the log discrepancies of 
divisors on any resolution $\tilde{X}$ constructed as above
are nonnegative.  This means $X$ is lc.
Since we can compute the mld in this case on just one resolution
$\tilde{X}$, the minimum of $\beta(x_0 \cdots x_n) - \beta(f)$
must be the the minimal log discrepancy.
\end{proof}

\section{The examples and their well-formedness}\label{sec: well-formed}

The exceptional Fano varieties used to prove \Cref{thm: main} are 
hypersurfaces in certain weighted projective spaces with weights
defined using \textit{Sylvester's sequence}.  This sequence is defined by
$s_0 \coloneqq 2$ and $s_{n+1} \coloneqq s_0 \cdots s_n + 1, n \geq 1$.
The first several
terms are $2,3,7,43,1807,\ldots$.
All elements of the sequence are pairwise coprime
and satisfy $s_n > 2^{2^{n-1}}$.  Their reciprocals
also have the following property for each $n$:
$$\frac{1}{s_0} + \cdots + \frac{1}{s_n} = 1 - \frac{1}{s_{n+1}-1}.$$
The hypersurfaces have slightly different
equations in even vs odd dimensions. 
\Cref{thm:hypersurface} and \Cref{thm:hypersurface-odd}
summarize their properties in each case.

\begin{theorem}\label{thm:hypersurface}
For every even integer $n \geq 2$, let 
\begin{align*}
    & a_n  \coloneqq \frac{1}{4}(s_n^2 + s_n - 4), \\
    & a_{n+1} \coloneqq \frac{1}{2}((s_n-1)a_n - s_n -1), \\
    & d \coloneqq (-1 + a_n + a_{n+1})(s_n-1), \\
    & a_i \coloneqq \frac{d}{s_i}, i = 0,\ldots,n-1, \\
    & b \coloneqq \frac{1}{2}(s_n^2 - s_n - 4), \\
    & c \coloneqq \frac{1}{2}(s_n+3),
\end{align*} and let $X$ be the hypersurface in $\mathbb{P}=\mathbb{P}(a_0,\ldots,a_{n+1})$ defined by the degree $d$ equation
\begin{equation}
\label{eveneq}
   x_0^2 + x_1^3 + \cdots + x_{n-1}^{s_{n-1}} + x_n^b x_{n+1} + x_1 \cdots x_{n-1} x_n x_{n+1}^c = 0. 
\end{equation}
Then $X$ is a well-formed exceptional Fano variety with $-K_X=\mathcal{O}_X(1)$ and
$$\mld(X) = \frac{s_n-1}{2a_{n+1}}.$$
\end{theorem}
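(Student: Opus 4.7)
The plan decomposes into three essentially independent parts: the basic geometric properties of $X$ (well-formedness and Fano index), the global mld computation, and exceptionality. I sketch all three, with the understanding that the last two will be carried out in detail in the subsequent sections.

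\emph{Basic geometric properties.} First, confirm every monomial in \eqref{eveneq} has weighted degree $d$: the Fermat-like terms contribute $s_i\cdot(d/s_i)=d$, and the two ``cross'' monomials force $b$ and $c$ to take exactly the stated values. Next, verify $-K_X=\mathcal{O}_X(1)$ by checking $\sum_{i=0}^{n+1}a_i=d+1$: the Sylvester identity $\sum_{i=0}^{n-1}1/s_i=1-1/(s_n-1)$ gives $\sum_{i=0}^{n-1}a_i=(a_n+a_{n+1}-1)(s_n-2)$, which together with $d=(a_n+a_{n+1}-1)(s_n-1)$ yields the desired identity. For well-formedness of $\mathbb{P}$, check the $\gcd$-condition on any $n+1$ of the $n+2$ weights using pairwise coprimality of $s_0,\ldots,s_{n-1}$ and explicit arithmetic for $a_n,a_{n+1}$ in terms of $s_n$. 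Finally, for well-formedness of $X\subset\mathbb{P}$, enumerate the singular strata of $\mathbb{P}$ and verify that $X$ cuts each one in codimension at least $2$.

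\emph{Computation of the mld.} A direct inspection of the coordinate vertices $P_j$ shows that $P_{n+1}$ is the unique non-quasismooth point of $X$: in the chart $x_{n+1}=1$ the equation has no linear part at the origin, while every other coordinate stratum is either missed by $X$ or quasismooth along $X$. Away from $P_{n+1}$ the local mld is a cyclic-quotient mld, given combinatorially by \Cref{prop:toricmld}; a case analysis over the relevant singular strata should show each such value is at least $(s_n-1)/(2a_{n+1})$. At $P_{n+1}$ the local model is a hypersurface in $\mathbb{A}^{n+1}/\mu_{a_{n+1}}$, and \Cref{thm:mldcomp} applies once Newton non-degeneracy of $f$ is verified face-by-face on $\Gamma_+(f)$. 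The local mld is then the minimum of $\beta(x_0\cdots x_n)-\beta(f)$ over $\beta\in N\cap\sigma\setminus\{0\}$, and the main task is to guess and verify the minimizing $\beta$ — the weights of the toroidal valuation determined by the face of $\Gamma_+(f)$ containing both cross monomials — which should yield exactly $(s_n-1)/(2a_{n+1})$.

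\emph{Exceptionality.} This is deferred to \Cref{thm: exceptionality}, where the weighted-cone construction of \cite{Totaro23} will be adapted with finer multiplicity bounds to handle all potentially $\alpha$-computing centers. The hardest step overall will be the mld calculation: pinpointing the optimal $\beta$ at $P_{n+1}$ and uniformly bounding the toric cyclic-quotient mlds at all other strata from below. Both are elementary but require careful arithmetic manipulations of the Sylvester numbers and of the integers $a_n,a_{n+1},b,c$ built from them.
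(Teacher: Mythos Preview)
Your proposal is correct and follows essentially the same decomposition and strategy as the paper: basic properties via \Cref{lem:first_properties} and \Cref{well-formed}, the mld via \Cref{thm:mldcomp} at the non-quasismooth point $P_{n+1}$ together with \Cref{prop:toricmld} elsewhere, and exceptionality deferred to \Cref{thm: exceptionality}. One small caution: in the chart $x_{n+1}=1$ only one ``cross'' monomial survives (the other becomes the pure power $x_n^b$), and the paper locates the optimal $\beta$ not via a face of $\Gamma_+(f)$ but by parametrizing the fractional lattice points as $\beta_j$, $1\le j<a_{n+1}$, observing $\beta_j(x_0\cdots x_n)-\beta_j(f)\equiv j/a_{n+1}\pmod{\mathbb{Z}}$, and showing the minimum $j_0=(s_n-1)/2$ realizes this congruence exactly while smaller $j$ give values exceeding $1$.
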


\begin{theorem}
\label{thm:hypersurface-odd}
For every odd integer $n \geq 3$, Let 
\begin{align*}
    & a_n  \coloneqq \frac{1}{4}(s_n^2 + 3s_n - 6), \\
    & a_{n+1} \coloneqq \frac{1}{4}((s_n-3)a_n - s_n -1), \\
    & d \coloneqq (-1 + a_n + a_{n+1})(s_n-1), \\
    & a_i \coloneqq \frac{d}{s_i}, i = 0,\ldots,n-1, \\
    & b \coloneqq \frac{1}{4}(s_n^2 - s_n - 2), \\
    & c \coloneqq \frac{1}{2}(s_n+5).
\end{align*} and let $X$ be the hypersurface in $\mathbb{P}=\mathbb{P}(a_0,\ldots,a_{n+1})$ defined by a degree $d$ equation
\begin{equation}
\label{oddeq}
    x_0^2 + x_1^3 + \cdots + x_{n-1}^{s_{n-1}} + x_n^{b}x_{n+1} + x_1 \cdots x_{n-1} x_n^2 x_{n+1}^c = 0.
\end{equation}
Then $X$ is a well-formed exceptional Fano variety with $-K_X=\mathcal{O}_X(1)$ and
$$\mld(X) = \frac{s_n-3}{4a_{n+1}}.$$
\end{theorem}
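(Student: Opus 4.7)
The plan is to establish Theorem \ref{thm:hypersurface-odd} by arguments closely parallel to (but more intricate than) the even case in Theorem \ref{thm:hypersurface}, exploiting that $s_n\equiv 3\pmod 4$ for every odd $n\geq 1$, which is what guarantees the integrality of $a_n$, $a_{n+1}$, $b$, and $c$. First I would record the degree identities $2a_0 = 3a_1 = \cdots = s_{n-1}a_{n-1} = d$ and $ba_n + a_{n+1} = a_1 + \cdots + a_{n-1} + 2a_n + c a_{n+1} = d$, which express that the five types of monomials appearing in \eqref{oddeq} all have weighted degree $d$; these follow from the definitions of $b$, $c$, and $d$ together with the Sylvester telescoping $\sum_{i=0}^{n-1} 1/s_i = 1 - 1/(s_n-1)$. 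The same telescoping gives $\sum_{i=0}^{n+1} a_i = d+1$, so $-K_{\mathbb{P}} = \mathcal{O}_{\mathbb{P}}(d+1)$, and $-K_X = \mathcal{O}_X(1)$ by adjunction once well-formedness is known.

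Next I would verify well-formedness in two stages. Since the $s_i$ are pairwise coprime, $\gcd(a_i, a_j) = d/(s_i s_j)$ for distinct $i,j \leq n-1$; combined with the explicit formulas for $a_n$ and $a_{n+1}$ this yields $\gcd(a_0,\ldots,\widehat{a_j},\ldots,a_{n+1}) = 1$ for every $j$, so $\mathbb{P}$ is well-formed. For $X$ itself I would enumerate the codimension-one strata of $\mathrm{Sing}(\mathbb{P})$ (each corresponding to a prime $p$ dividing exactly some subset of the weights) and check that at least one of the monomials $x_0^2,\ldots,x_{n-1}^{s_{n-1}}, x_n^b x_{n+1}$ fails to vanish along each such stratum, forcing $X\cap\mathrm{Sing}(\mathbb{P})$ to have codimension at least two in $X$; normality of $X$ follows from Serre's criterion using that the hypersurface is Newton non-degenerate and cut out by a single irreducible equation.

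For the minimal log discrepancy, I would combine Theorem \ref{thm:mldcomp} at the unique non-quasismooth point with Proposition \ref{prop:toricmld} at the remaining quasismooth cyclic-quotient singularities. Newton non-degeneracy of $f$ at the origin is checked by listing the compact faces of $\Gamma_+(f)$, each involving at most three of the five monomials in \eqref{oddeq}, and verifying smoothness on the open torus face by face. The mld then reduces to minimizing the piecewise-linear functional $\beta\mapsto \beta(x_0\cdots x_n)-\beta(f)$ over primitive lattice rays in the positive orthant of $N = \Z^{n+1} + \Z\cdot\tfrac{1}{r}(a_0,\ldots,a_{n+1})$; I would identify the optimal ray explicitly, locating it in the direction forced by the monomial $x_1\cdots x_{n-1}x_n^2 x_{n+1}^c$, and compute that the minimum value is precisely $(s_n-3)/(4a_{n+1})$. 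Comparing with the local mlds at the other torus-fixed points, which are strictly larger by a direct application of Proposition \ref{prop:toricmld}, confirms that this value is the global minimum.

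Finally, exceptionality is the hardest step, and I expect it to be the main obstacle. Following the weighted-cone template of \cite[Section 4]{Totaro23}, the goal is to show $\alpha(X) > 1$ by bounding from above, for every closed point $x\in X$ and every $m$, the multiplicity at $x$ of an arbitrary section of $\mathcal{O}_X(m)$. Because \eqref{oddeq} carries $x_n^2$ rather than $x_n$ in the mixed monomial, the clean inductive structure of the even case is partially lost and a more careful stratification of $X$ by torus orbits will be needed. The truly delicate point is the non-quasismooth torus-fixed point, where I would argue by applying Theorem \ref{thm:mldcomp} to the pair $(X, tD)$ and using Newton non-degeneracy of the perturbed equation to show log canonicity persists for some $t > 1$; at the quasismooth fixed points the bounds come from Proposition \ref{prop:toricmld} applied to the local cyclic quotient, and along the lower-dimensional strata one runs an induction on $n$ keyed to the inclusion of lower Sylvester-weighted hypersurfaces. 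Modulo this multiplicity bookkeeping, the argument is a direct analog of the proof we will give for the even case.
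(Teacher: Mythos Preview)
Your outline is broadly on target for the first three parts (degree identities, well-formedness, mld), and essentially matches the paper's structure. One notational slip to correct: in the mld step you write the lattice as $N=\Z^{n+1}+\Z\cdot\tfrac{1}{r}(a_0,\ldots,a_{n+1})$, but the computation is carried out in the affine chart $x_{n+1}=1$, where the hypersurface sits in $\mathbb{A}^{n+1}$ with coordinates $x_0,\ldots,x_n$ and the quotient is by $\mu_{a_{n+1}}$; thus the correct lattice is $\Z^{n+1}+\Z\cdot\tfrac{1}{a_{n+1}}(a_0,\ldots,a_n)$. The paper also does not verify Newton non-degeneracy face by face; it invokes \Cref{rem:coefficients} to take general coefficients, so non-degeneracy is automatic. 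Finally, the paper's argument that $j_0=(s_n-3)/4$ really gives the minimum is more delicate than you suggest: one first observes $\beta_j(x_0\cdots x_n)-\beta_j(f)\equiv j/a_{n+1}\pmod\Z$, which reduces the task to ruling out the finitely many $1\le j<j_0$, and this requires a short but nontrivial case analysis.

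The genuine gap is in your exceptionality plan. You propose to handle the non-quasismooth point by ``applying Theorem~\ref{thm:mldcomp} to the pair $(X,tD)$ and using Newton non-degeneracy of the perturbed equation.'' This cannot work as stated: Theorem~\ref{thm:mldcomp} computes the mld of a hypersurface, not of a pair, and more importantly the divisor $D$ is an \emph{arbitrary} effective $\mathbb{Q}$-divisor with $D\sim_{\mathbb{Q}}-K_X$, so there is no equation attached to $D$ whose Newton non-degeneracy one could invoke. The paper's route is quite different. It uses the Johnson--Koll\'ar multiplicity bound (\Cref{multi-bound}) together with \Cref{claim2.10.4} to handle all smooth points of the stack $X$ at once; at the non-quasismooth point it performs a weighted blow-up, passes to the weighted tangent cone $X^c\subset\mathbb{P}(b_0,\ldots,b_n)$ (a klt hypersurface of degree $s_n-1$, by \Cref{odd-klt-Newton}), and applies \cite[Lemma~4.1]{Totaro23} to reduce lc-ness of $(X,\nu_n D)$ near the origin to lc-ness of $(X^c,\nu_n D^c)$ plus a degree condition $l\le 0$. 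This is then iterated once more at the singular point of $X^c$, where the local equation becomes $x_0^2+\cdots+x_{n-1}^{s_{n-1}}+x_1\cdots x_{n-1}=0$ and one can quote \cite[Lemma~5.1]{Totaro23} directly. So the ``induction'' you allude to is not on $n$ but on the depth of the weighted-cone tower, and the key inputs are multiplicity bounds rather than any Newton-polytope analysis of $D$.
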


We note that substituting in the definition of $a_{n+1}$ in the 
expressions for the mld yields the form given in \Cref{thm: main}.

\begin{remark}
\label{rem:coefficients}
In the two theorems above, we choose equations with coefficients
all equal to $1$ to define $X$.  However, since each equation
has the same number of monomials as variables, we could in fact
take the coefficients to be general nonzero complex numbers.
This is because any hypersurface of this form is isomorphic
to $X$ by scaling the
variables.
\end{remark}

The hypersurfaces in Theorem \ref{thm:hypersurface} and Theorem 
\ref{thm:hypersurface-odd} fail to be quasismooth at
the coordinate point of the last variable $x_{n+1}$.
We conjecture that each has the
smallest mld among all exceptional Fano varieties
of the same dimension $n$ (Conjecture \ref{conj: smallest mld}).
As a comparison, \cite[Theorem 3.1]{Totaro23} defines
non-quasismooth hypersurfaces with similar equations in weighted projective
spaces with different weights; these are conjectured to have
minimum anti-canonical volume among all exceptional Fano varieties of dimension $n$.

When $n = 2$, this example is the hypersurface $X_{282} \subset \mathbb{P}(141,94,13,35)$ defined by the equation
$$x_0^2 + x_1^3 + x_2^{19} x_3 + x_1 x_2 x_3^5 = 0.$$
The mld of this surface is $3/35$.
This is known to be the smallest possible mld of an exceptional
del Pezzo surface by \cite[Theorem 1.9(2)]{LS23}.  In particular, this
construction realizes the example in \cite{LS23} as a non-quasismooth
hypersurface in weighted projective space.

When $n = 3$, this example is the hypersurface 
$X_{227262} \subset \mathbb{P}^4(113631,75754,32466,493,4919)$
defined by the equation
$$x_0^2 + x_1^3 + x_2^7 + x_3^{451} x_4 + x_1 x_2 x_3^2 x_4^{24} = 0.$$
The mld of this threefold is $10/4919$.
The best known explicit lower bound on the mld of an
exceptional Fano threefold is $1/I_0$, where
$I_0 \approx 10^{10^{10.8}}$ \cite[Theorem 1.4]{BL23}.
We expect the example above to achieve the optimal bound.
To the authors' knowledge, the Fano threefold with
smallest mld previously known was Totaro's exceptional
threefold of small anti-canonical volume \cite[p. 8]{Totaro23}.
That example has mld $41/17629 > 10/4919$.

When $n = 4$, this example is a hypersurface in a certain
weighted projective space of dimension $5$
defined by the equation
$$x_0^2 + x_1^3 + x_2^7 + x_3^{43} + x_4^{1631719}x_5 + x_1 x_2 x_3 x_4 x_5^{905} = 0.$$
The mld of this fourfold is $903/737536085$.

\begin{proof}[\bf Proof of Theorem \ref{thm:hypersurface} and Theorem 
\ref{thm:hypersurface-odd}]
The proof of the properties of the examples defined in the 
two theorems will occupy the next 
few sections.
By \Cref{lem:first_properties}, the polynomials defining the hypersurfaces 
have degree $d$. By \Cref{well-formed}, the hypersurfaces are well-formed. 
\Cref{hypersurfmld} computes the mld. \Cref{thm: exceptionality} and 
\Cref{thm: exceptionality-odd} show the hypersurfaces are exceptional.
\end{proof}

We'll begin by confirming that the weights and equations above
do in fact define a hypersurface of the indicated degree.

\begin{lemma}
\label{lem:first_properties}
For each $n \geq 2$, $a_0 + \cdots + a_{n+1} = d + 1$ and the polynomial in \eqref{eveneq} or \eqref{oddeq} is homogeneous of degree $d$.
\end{lemma}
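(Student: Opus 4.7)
The plan is to split the lemma into two independent claims: the global sum identity $\sum_{i=0}^{n+1} a_i = d+1$ and the separate homogeneity check for each monomial in the defining equation. I would dispatch the sum identity first, since it has a clean uniform proof in both parities, and then handle the monomial degrees one family at a time.

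For the sum identity, I would invoke the classical Sylvester reciprocal formula $\sum_{i=0}^{n-1} 1/s_i = 1 - 1/(s_n - 1)$ quoted in the section opening. Multiplying through by $d$ and using $a_i = d/s_i$ for $0 \leq i \leq n-1$ gives
\[
 \sum_{i=0}^{n-1} a_i \;=\; d - \frac{d}{s_n - 1}.
\]
The definition $d = (-1 + a_n + a_{n+1})(s_n-1)$ rearranges to $d/(s_n-1) = a_n + a_{n+1} - 1$; substituting this back and adding $a_n + a_{n+1}$ to both sides yields $\sum_{i=0}^{n+1} a_i = d+1$, uniformly in the parity of $n$.

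For the homogeneity check, the pure-power monomials $x_i^{s_i}$ with $0 \leq i \leq n-1$ contribute degree $s_i a_i = d$ by construction, so only the two mixed monomials require attention. For $x_n^b x_{n+1}$ I would substitute the explicit quadratic formulas for $a_n$, $a_{n+1}$, $b$ in $s_n$ and expand; in the even case one gets $b a_n = (s_n^4 - 9 s_n^2 + 16)/8$ and $s_n a_{n+1} + 2 = (s_n^4 - 9 s_n^2 + 16)/8$, so the desired identity $b a_n + a_{n+1} = d$ reduces, via the formula for $a_{n+1}$, to exactly this coincidence. The odd case is an analogous (slightly longer) computation.

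For the final monomial $x_1 \cdots x_{n-1} x_n x_{n+1}^c$ (even) or $x_1 \cdots x_{n-1} x_n^2 x_{n+1}^c$ (odd), rather than brute substitution, I would use the partial sum $\sum_{i=1}^{n-1} a_i = (a_0 + \cdots + a_{n-1}) - a_0 = d - d/2 - d/(s_n-1) = d/2 - (a_n + a_{n+1} - 1)$. The target identity then collapses to a linear equation in $a_n, a_{n+1}$ equivalent to the \emph{definition} of $a_{n+1}$ in the relevant parity: $2a_{n+1} = (s_n-1)a_n - s_n - 1$ in the even case and $4a_{n+1} = (s_n-3)a_n - s_n - 1$ in the odd case. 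This is by design the cleanest way to see that the coefficients in the statement of \Cref{thm:hypersurface,thm:hypersurface-odd} were chosen precisely so the top monomial lies on the degree-$d$ hyperplane. The main obstacle is merely organizational: tracking the even/odd bifurcation consistently. Once each mixed monomial identity is rewritten so that the defining relation for $a_{n+1}$ appears literally, no further calculation is needed.
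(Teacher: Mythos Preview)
Your proposal is correct and follows essentially the same route as the paper: both establish the sum identity via the Sylvester reciprocal formula $\sum_{i=0}^{n-1} 1/s_i = 1 - 1/(s_n-1)$, then verify the two mixed monomials separately, with the final monomial reducing in each parity to the defining relation $2a_{n+1} = (s_n-1)a_n - s_n - 1$ (resp.\ $4a_{n+1} = (s_n-3)a_n - s_n - 1$). The only stylistic difference is in the $x_n^b x_{n+1}$ check: you expand $b a_n$ and $d - a_{n+1}$ as explicit quartic polynomials in $s_n$ and match, whereas the paper keeps everything in terms of $a_n, a_{n+1}$ and simplifies $d - a_{n+1}$ step by step until the factor $b$ appears.
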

\begin{proof}

First, the sum of the weights is $d + 1$ for all $n$:
\begin{align*}
    a_0 + \cdots + a_{n+1} & = \frac{d}{s_0} + \cdots + \frac{d}{s_{n-1}} + a_n + a_{n+1} \\ 
    & = \frac{s_n-2}{s_n-1}(-1 + a_n + a_{n+1})(s_n-1) + (-1 + a_n + a_{n+1}) +1 \\ 
    & = (s_n-2+1)(-1 + a_n + a_{n-1}) + 1 = d + 1.
\end{align*}
It's clear that $x_0^2, x_1^3, \ldots, x_{n-1}^{s_{n-1}}$ are monomials of 
degree $d$ in both the even and odd cases.  We need to check that the last 
two monomials have degree $d$.  We begin with the case of $n$ even. 
For the monomial $x_n^b x_{n+1}$, observe that
\begin{align*}
    d - a_{n+1} &  = (s_n-1)a_n + (s_n-2)a_{n+1} - s_n+1 \\
    & = (s_n-1)a_n + (s_n-2)\frac{1}{2}((s_n-1)a_n - s_n - 1) - s_n + 1 \\
    & = (s_n-1)a_n + \frac{1}{2}(s_n^2 - 3s_n + 2)a_n - \frac{1}{2}(s_n^2 - s_n - 2) - s_n + 1 \\
    & = \frac{1}{2}(s_n^2 - s_n)a_n - \frac{1}{2}(s_n^2 + s_n - 4) \\
    & = \frac{1}{2}(s_n^2-s_n)a_n - 2a_n = \frac{1}{2}(s_n^2 - s_n - 4)a_n = ba_n.
\end{align*}
Finally, for the monomial $x_1 \cdots x_{n-1} x_n x_{n+1}^c$, note that:
\begin{align*}
    d - a_1 - \cdots - a_n & = \left(1 - \frac{1}{s_1} - \cdots - \frac{1}{s_{n-1}} \right)d - a_n \\
    & = \left(\frac{1}{2} + \frac{1}{s_n-1}\right)d - a_n \\
    & = \frac{s_{n}+1}{2}a_{n+1} + \frac{s_n - 1}{2}a_n - \frac{s_n+1}{2} \\
    & = \frac{s_n+1}{2}a_{n+1} + \frac{1}{2}((s_n-1)a_n - s_n - 1) \\
    & = \frac{s_n+1}{2}a_{n+1} + a_{n+1} \\
    & = \frac{1}{2}(s_n+3)a_{n+1} = c a_{n+1}.
\end{align*}
This completes the proof that the equation in \eqref{eveneq} is weighted homogeneous of degree $d$ for $n$ even. Next, we treat the case of odd $n$.
For the monomial $x_n^b x_{n+1}$, we have 
\begin{align*}
    d - a_{n+1} &  = (s_n-1)a_n + (s_n-2)a_{n+1} - s_n+1 \\
    & = (s_n-1)a_n + (s_n-2)\frac{1}{4}((s_n-3)a_n - s_n - 1) - s_n + 1 \\
    & = (s_n-1)a_n + \frac{1}{4}(s_n^2 - 5s_n + 6)a_n - \frac{1}{4}(s_n^2 - s_n - 2) - s_n + 1 \\
    & = \frac{1}{4}(s_n^2 - s_n + 2)a_n - \frac{1}{4}(s_n^2 + 3s_n - 6) \\
    & = \frac{1}{4}(s_n^2-s_n + 2)a_n - a_n = \frac{1}{2}(s_n^2 - s_n - 2)a_n = ba_n.
\end{align*}
Finally, for the monomial $x_1 \cdots x_{n-1} x_n^2 x_{n+1}^c$, note that
\begin{align*}
    d - a_1 - \cdots - a_{n-1} - 2a_n & = \left(1 - \frac{1}{s_1} - \cdots - \frac{1}{s_{n-1}} \right)d - 2a_n \\
    & = \left(\frac{1}{2} + \frac{1}{s_n-1}\right)d - 2a_n \\
    & = \frac{s_n+1}{2}a_{n+1} + \frac{s_n - 3}{2}a_n - \frac{s_n+1}{2} \\
    & = \frac{s_n+1}{2}a_{n+1} + \frac{1}{2}((s_n-3)a_n - s_n - 1) \\
    & = \frac{s_n+1}{2}a_{n+1} + 2a_{n+1} \\
    & = \frac{1}{2}(s_n+5)a_{n+1} = c a_{n+1}.
\end{align*}
This completes the proof that the equation \eqref{oddeq} is weighted homogeneous of degree $d$ when $n$ is odd.
\end{proof}

Next, we'll prove that the hypersurface $X$ is well-formed, normal,
and Fano with $K_X = \mathcal{O}_X(-1)$.
The notations are as in \Cref{thm:hypersurface} and 
\Cref{thm:hypersurface-odd}.

\begin{lemma}
\label{well-formed}
The weighted projective space $\mathbb{P}$ and the hypersurface $X$ are 
well-formed; in fact, we have $\gcd(a_i,a_{n+1}) = 1$ for each
$i = 0,\ldots,n$ and $\gcd(a_i,a_n) = 1$ for each $i = 0,\ldots,n-1$.
Moreover, $X$ is quasismooth away from the coordinate point of 
the last variable $x_{n+1}$ and $K_X = \mathcal{O}_X(-1)$.
\end{lemma}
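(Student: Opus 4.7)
The plan is to proceed in four stages: establish the two claimed coprimality relations, deduce well-formedness of $\mathbb{P}$ and of $X$, compute $K_X$ via adjunction, and verify quasismoothness away from the coordinate point of $x_{n+1}$ by analyzing the partial derivatives of the defining equation.

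For the coprimality relations, the key arithmetic inputs are that the Sylvester numbers are pairwise coprime and that $s_n\equiv 1\pmod{s_j}$ for $j<n$. In the even-$n$ case, reducing $4a_n=s_n^2+s_n-4$ modulo $s_j$ with $1\le j<n$ gives $4a_n\equiv -2$, and $4a_{n+1}=2(s_n-1)a_n-2(s_n+1)\equiv -4$; the odd-$n$ case is analogous (using $4(a_{n+1}+1)=(s_n-3)(a_n-1)$ in place of the even-case identity $2(a_{n+1}+1)=(s_n-1)(a_n-1)$). This shows no $s_j$ with $1\le j<n$ divides $a_n$ or $a_{n+1}$. Writing $a_i=(s_0\cdots\widehat{s_i}\cdots s_{n-1})\cdot(a_n+a_{n+1}-1)$ for $i<n$, any common prime factor of $a_i$ with $a_n$ or $a_{n+1}$ is either one of these $s_j$, ruled out above, or divides $a_n+a_{n+1}-1$. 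Using the identities just mentioned, this remaining case reduces (modulo $\gcd$'s) to the prime $2$, which is excluded by a parity check showing $a_n$ and $a_{n+1}$ are both odd; the latter follows from $s_n\equiv 7\pmod 8$ in the even case and $s_n\equiv 3\pmod 8$ in the odd case, facts proved by induction on $n$ from the recursion defining the Sylvester sequence. This arithmetic step is the main obstacle in the proof.

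Given the coprimality relations, $\gcd(a_0,\ldots,\widehat{a_j},\ldots,a_{n+1})=1$ for each $j$: for $j\in\{n,n+1\}$ this is immediate, and for $j<n$ we use $n\ge 2$ to pick an index $i<n$ with $i\ne j$ so that $\gcd(a_i,a_{n+1})=1$. For well-formedness of $X$, every codimension-$2$ singular stratum of $\mathbb{P}$ is of the form $Z=\{x_j=x_k=0\}$; if $X\supseteq Z$, then each pure-power monomial $x_i^{e_i}$ with $0\le i\le n-1$ in $F$ must involve $x_j$ or $x_k$, forcing $\{0,1,\ldots,n-1\}\subseteq\{j,k\}$. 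This is impossible for $n\ge 3$, and fails for $n=2$ because $x_n^bx_{n+1}$ involves neither $x_0$ nor $x_1$. All singular strata of codimension at least $3$ automatically meet $X$ in codimension at least $2$ in $X$. With both $\mathbb{P}$ and $X$ well-formed, $K_{\mathbb{P}}=\mathcal{O}_{\mathbb{P}}(-(d+1))$, and the identity $\sum_i a_i=d+1$ from \Cref{lem:first_properties} yields $K_X=\mathcal{O}_X(-1)$ by adjunction.

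Finally, quasismoothness reduces to a case analysis on the affine cone. The relation $\partial F/\partial x_0=2x_0$ forces $x_0=0$ at any critical point. If also $x_{n+1}=0$, then $\partial F/\partial x_i=s_ix_i^{s_i-1}$ for $i=1,\ldots,n-1$ and $\partial F/\partial x_{n+1}=x_n^b$ cascade to $x_1=\cdots=x_n=0$, so the point is the origin. If $x_{n+1}\ne 0$ and some $x_{i_0}=0$ with $1\le i_0\le n-1$, then $\partial F/\partial x_j=0$ for $j\in\{1,\ldots,n-1\}\setminus\{i_0\}$ cascade to $x_j=0$, giving $x_1=\cdots=x_{n-1}=0$; then $\partial F/\partial x_n=bx_n^{b-1}x_{n+1}$ forces $x_n=0$, placing the point on the $x_{n+1}$-axis. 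In the remaining case $x_1,\ldots,x_{n+1}$ are all nonzero, and simultaneously solving $\partial F/\partial x_n=0$ and $\partial F/\partial x_{n+1}=0$ reduces after substitution to $(1-bc)x_n^b=0$ in the even case and $(2-bc)x_n^b=0$ in the odd case, each contradicting the explicit sizes of $b$ and $c$. Hence the only critical locus in the cone is the $x_{n+1}$-axis, proving quasismoothness away from the coordinate point of $x_{n+1}$.
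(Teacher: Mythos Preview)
Your argument is broadly sound, but there are two omissions you should patch, and your quasismoothness proof takes a different route from the paper's.

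\textbf{Omissions.} First, your coprimality paragraph never treats $\gcd(a_n,a_{n+1})$: the factorization $a_i=(s_0\cdots\widehat{s_i}\cdots s_{n-1})(a_n+a_{n+1}-1)$ only applies for $i<n$, so the case $i=n$ of the first claimed relation is missing. It is easy to supply with the same identity you already use: if a prime $p$ divides both $a_n$ and $a_{n+1}$, then (even case) $2(a_{n+1}+1)=(s_n-1)(a_n-1)$ gives $2\equiv 1-s_n\pmod p$, so $s_n\equiv -1$, whence $4a_n=s_n^2+s_n-4\equiv -4\pmod p$ forces $p=2$, contradicting the parity of $a_n$; the odd case is analogous. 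Second, in your quasismoothness case analysis the ``remaining case'' should be $x_{n+1}\neq 0$ and $x_1,\ldots,x_{n-1}$ all nonzero, but you also assume $x_n\neq 0$ without justification. The subcase $x_n=0$ with $x_1,\ldots,x_{n-1},x_{n+1}$ nonzero is missing. It is immediate (in the even case $\partial F/\partial x_n=x_1\cdots x_{n-1}x_{n+1}^c\neq 0$; in the odd case $\partial F/\partial x_i=s_ix_i^{s_i-1}\neq 0$ for any $1\le i\le n-1$), but should be stated. You also use adjunction before establishing normality; since normality follows from your quasismoothness analysis (smooth in codimension one plus Cohen--Macaulay), reorder the argument.

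\textbf{Comparison with the paper.} For the gcd claims, the paper proceeds by separately proving $\gcd(r,a_n)=\gcd(r,a_{n+1})=\gcd(s_n-1,a_n)=\gcd(s_n-1,a_{n+1})=\gcd(a_n,a_{n+1})=1$ (where $r=a_n+a_{n+1}-1$), then deduces the relations with $a_i$ for $i<n$ from $a_i\mid r(s_n-1)$; your direct prime-factor decomposition of $a_i$ reaches the same conclusion more compactly but is sketchier at the step ``reduces to the prime $2$,'' which still requires substituting the congruence for $s_n$ back into $4a_n$. For quasismoothness, the paper invokes \Cref{rem:coefficients} to replace $X$ by a general member of the linear system spanned by its monomials and then uses a Bertini/base-locus argument; your explicit Jacobian computation is more hands-on and has the advantage of working directly with the specific coefficients (all equal to $1$), at the cost of the extra case-checking. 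Both approaches are valid once the gaps above are filled.
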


\begin{proof}
We begin by verifying that the ambient weighted projective spaces $\mathbb{P}$
are well-formed, meaning that
$\gcd(a_0,\ldots,\widehat{a_j},\ldots,a_{n+1}) = 1$ for all $j = 
0,\ldots,n+1$.  We'll actually prove the stronger gcd conditions in the lemma,
which clearly imply well-formedness.  Through the lemma, we use the notation
$r \coloneqq -1 + a_n + a_{n+1} = d/(s_n-1)$.

Because of the factors of $1/2$ and $1/4$ in the definitions, we can first show that $a_n$ and $a_{n+1}$ are odd.  For $n \geq 1$,
we observe by induction that $s_n \equiv 7 \pmod 8$ when $n$ is even 
and that $s_n \equiv 3 \pmod 8$ when $n$ is odd.  Then for $n$ even,
$s_n^2 + s_n - 4 \equiv 4 \pmod 8$, and for $n$ odd, 
$s_n^2 + 3s_n - 6 \equiv 4 \pmod 8$.  It follows that $a_n$ is always odd.
Since $ba_n + a_{n+1} = d$ and $d$ is even, $a_{n+1}$ is odd as well.

Next, we prove $\gcd(r,a_n) = 1$.  Suppose by way of contradiction
that a prime $p$ divides $r$ and $a_n$.  By the above, we may assume
$p$ is odd.  The equation $r = -1 + a_n + a_{n+1}$ yields $a_{n+1} \equiv 1 \pmod p$.
Now we split into two cases.  For $n$ even, 
$2a_{n+1} = (s_n-1)a_n - s_n -1$ yields $2 \equiv -s_n-1 \pmod p$
so $p$ divides $s_n+3$.  
But $4a_n = s_n^2 +s_n - 4 = (s_n+3)(s_n-2)+2 \equiv 2 \pmod p$,
so $0 \equiv 2 \pmod p$, a contradiction.  Similarly, for $n$ odd,
$4a_{n+1} \equiv (s_n-3)a_n-s_n-1$ yields $4 \equiv -s_n - 1 \pmod p$,
so $p$ divides $s_n+5$.  
But $4a_n = s_n^2 + 3s_n - 6 = (s_n+5)(s_n-2)+4 \equiv 4 \pmod p$,
so $4 \equiv 0 \pmod p$, a contradiction.

To show $\gcd(r,a_{n+1}) = 1$, suppose that $p$ odd divides $r$
and $a_{n+1}$.  This gives $a_n \equiv 1 \pmod p$.  For $n$
even, $2a_{n+1} = (s_n-1)a_n - s_n -1 \equiv s_n - 1 - s_n -1 = -2 \pmod p$,
so $0 \equiv -2 \pmod p$, a contradiction.  For $n$ odd, we similarly
get $4a_{n+1} = (s_n-3)a_n - s_n-1 \equiv s_n - 3 - s_n - 1 = -4 \pmod p$
so $0 \equiv -4 \pmod p$, a contradiction.

Next, we calculate $\gcd(s_n-1,a_n) = 1$. Assume an odd prime $p$ divides
both.  When $n$ is even, $4a_n = s_n^2 + s_n - 4 = (s_n-1)(s_n+2) - 2$
shows that $0 \equiv -2 \pmod p$.  When $n$ is odd,
$4a_n = s_n^2 + 3s_n - 6 = (s_n-1)(s_n+4) - 2$ shows $0 \equiv -2 \pmod p$.
This is a contradiction in either case.

Similarly, we can show $\gcd(s_n-1,a_{n+1}) = 1$. Assume an odd prime $p$ divides
both.  When $n$ is even, we use $2a_{n+1} = (s_n-1)(-1+a_n) - 2 \equiv 
-2 \pmod p$, implying $0 \equiv -2 \pmod p$.  When $n$ is odd,
$4a_{n+1} = (s_n-3)a_n - s_n - 1 = (s_n-1)a_n - 2a_n - s_n - 1 \equiv 
-2a_n - 2 \pmod p$.  Hence $p$ divides $2a_n + 2$. But 
$$2a_n + 2 = \frac{1}{2}(s_n^2 + 3s_n - 6) + 2 = \frac{1}{2}(s_n-1)(s_n + 4)
+ 1 \equiv 1 \pmod p.$$
This means $0 \equiv 1 \pmod p$.  In either case, we have a contradiction.

Lastly, we'll show $\gcd(a_n,a_{n+1}) = 1$. Assume $p$ odd divides both.  When $n$
is even, notice that $2a_{n+1} \equiv -s_n-1 \pmod p$ so $p$ divides
$s_n+1$.  But this would mean
$$a_n = \frac{1}{4}(s_n(s_n+1) - 4) \equiv -1 \pmod p,$$
a contradiction.  When $n$ is odd, we also get $p | s_n+1$, but
$$a_n = \frac{1}{4}((s_n+1)(s_n+2) - 8) \equiv -2 \pmod p,$$
so $0 \equiv 2 \pmod p$, a contradiction.

This concludes the proof of all the gcd statements in the lemma. Indeed,
$\gcd(r,a_n) = \gcd(s_n-1,a_n) = 1$ implies $\gcd(a_i,a_n) = 1$ for
each $i = 0,\ldots,n-1$, and similarly for $a_{n+1}$.  In particular,
$\mathbb{P}$ is well-formed.

The hypersurface $X$ is also well-formed because $\mathbb{P}$ is
well-formed, and the presence of the monomials $x_0^2,\ldots,x_{n-1}^{s_{n-1}},
x_n^b x_{n+1}$ means that only the coordinate points of 
$x_n$ and $x_{n+1}$ are contained in $X$ among all the toric strata of $X$.
Hence the hypersurface $X$ does not contain any stratum of 
$\mathrm{Sing}(\mathbb{P})$ of codimension $2$ in $\mathbb{P}$.
Each of these coordinate points
point has codimension at least $2$ in $X$ since $\dim(X) \geq 2$,
so $X$ is well-formed.

In light of \Cref{rem:coefficients}, we can
take $X$ to be general in the linear system
defined by the monomials of the defining equation.
The only base points for this linear system on $\mathbb{A}^{n+2}$
are multiples of $(0,\ldots,1,0)$ and $(0,\ldots,0,1)$. 
Hence $X$
is quasismooth away the coordinate points of the last two 
coordinates.  Since $x_n^b x_{n+1}$ is a monomial in the 
equation for $X$, a general equation of the linear system
will have a non-vanishing derivative at $(0,\ldots,1,0)$,
so $X$ is also quasismooth at the coordinate point of $x_n$.
Hence only the coordinate point of $x_{n+1}$ is a non-quasismooth
point.
Since $X$ is a hypersurface and the non-quasismooth
locus has codimension at least $2$ in $X$,
it follows that $X$ is normal.
Indeed, we can verify the normality of $X$
in each affine chart $\{x_i \neq 0\} = \mathbb{A}^{n+1}/\mu_{a_i}$
of $\mathbb{P}$ by Serre's criterion:
hypersurfaces in $\mathbb{A}^{n+1}$ are Cohen-Macaulay,
the singular locus of the defining equation
in this affine space has codimension at least
$2$, and the quotient by $\mu_{a_i}$ preserves normality.

We've shown that $X$ is normal and 
well-formed.  Therefore, the adjunction formula holds
for $X$ (cf. \cite[p. 3]{Totaro23}) and $K_X = 
\mathcal{O}_X(d - a_0 - \cdots - a_{n+1}) = \mathcal{O}_X(-1)$.
\end{proof}

\section{Computation of the minimal log discrepancy}\label{sec: compute mld}

In this section, we compute the exact value of $\mld(X)$ in each
dimension, where $X$ is the hypersurface from Theorems \ref{thm:hypersurface}
and \ref{thm:hypersurface-odd}.  We continue to use all notation from
\Cref{sec: well-formed}.  The value of the mld comes from
computing the difference $\beta(x_0 \cdots x_n) - \beta(f)$
from \Cref{thm:mldcomp} for a particular hypersurface and 
lattice point $\beta$.  Much of the length of the proof 
is then spent confirming that this is 
actually the smallest log discrepancy achieved
by any divisor over $X$.

\begin{theorem}
\label{hypersurfmld}
The minimal log discrepancy of the hypersurface $X$ is
$$\mld(X) = \begin{cases}
    \frac{s_n-1}{2a_{n+1}} & n \text{ is even}, \\
    \frac{s_n-3}{4a_{n+1}} & n \text{ is odd}.
\end{cases}.$$
This mld is achieved at the unique non-quasismooth point of $X$.
\end{theorem}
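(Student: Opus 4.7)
The plan is to handle the unique non-quasismooth point $p_{n+1}$ separately from the other (quasismooth) singular points of $X$. At $p_{n+1}$ I would apply \Cref{thm:mldcomp} to compute $\mld(X,p_{n+1})$ exactly, while at each quasismooth singular point the local singularity is a cyclic quotient induced from the ambient $\mathbb{P}$, so \Cref{prop:toricmld} gives a direct lower bound. The goal is to show that $\mld(X,p_{n+1})$ equals the value in the statement and that every other local mld is strictly larger, thereby identifying both the value of the global mld and the place where it is attained.

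To compute $\mld(X,p_{n+1})$, I would work in the affine chart $\{x_{n+1}\neq 0\}\cong \mathbb{A}^{n+1}/\mu_{a_{n+1}}$, where $\mu_{a_{n+1}}$ acts with weights $(a_0,\ldots,a_n)$ and $X$ is cut out by $f = x_0^{s_0}+\cdots+x_{n-1}^{s_{n-1}}+x_n^{b}+x_1\cdots x_{n-1}x_n$ in the even case, with $x_n$ replaced by $x_n^{2}$ in the mixed term when $n$ is odd. Newton non-degeneracy should follow from the ``diagonal plus one mixed monomial'' structure of $f$: on each compact face $\gamma$ of $\Gamma_+(f)$, the restricted polynomial $f_\gamma$ has only a handful of monomials and a direct partial-derivative check shows $V(f_\gamma)$ is smooth on the open torus. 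I would then identify the optimal $\beta$ by imposing $s_0\beta_0=\cdots=s_{n-1}\beta_{n-1}$ together with equality to the weight of the mixed monomial; combined with the Sylvester identity $\sum_{i=0}^{n-1} 1/s_i = 1-1/(s_n-1)$, this pins $\beta$ down, up to scale, as a multiple of $(1/s_0,\ldots,1/s_{n-1},(s_n+1)/(2(s_n-1)))$ in the even case and the analogous vector with $(s_n+1)/(4(s_n-1))$ in the last slot in the odd case. Using $\gcd(a_n-1,a_{n+1})=1$, a consequence of the relations in \Cref{well-formed}, I can scale $\beta$ by $(s_n-1)/a_{n+1}$ and exhibit it explicitly as an element of $N = \mathbb{Z}^{n+1}+\mathbb{Z}\cdot\frac{1}{a_{n+1}}(a_0,\ldots,a_n)$, yielding $(s_n-1)/(2a_{n+1})$ (respectively $(s_n-3)/(4a_{n+1})$) as an upper bound on $\mld(X,p_{n+1})$.

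The matching lower bound at $p_{n+1}$ is the main obstacle and requires a case analysis on which monomial of $f$ attains the minimum $\beta(f)$ for a given nonzero lattice point $\beta$ in the positive orthant. If $\beta(f) = s_i\beta_i$ for some $i < n$, then $\beta(x_0\cdots x_n)-\beta(f) = \sum_{j\neq i}\beta_j - (s_i-1)\beta_i$, and I would bound this below by combining the inequalities $s_j\beta_j\geq s_i\beta_i$ for the other pure powers with the arithmetic constraint that $a_{n+1}\beta\in\mathbb{Z}^{n+1}+\mathbb{Z}\cdot(a_0,\ldots,a_n)$. The case $\beta(f)=b\beta_n$ is analogous, using the identity $ba_n+a_{n+1} = d$ recorded in \Cref{lem:first_properties}. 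The most delicate case is when the mixed monomial attains the minimum: the difference $\beta(x_0\cdots x_n)-\beta(f)$ collapses to $\beta_0$ (in the even case) or $\beta_0 - \beta_n$ (in the odd case), and the lattice structure of $N$ together with the tightness of the other monomial inequalities must be used to show $\beta_0\geq (s_n-1)/(2a_{n+1})$, respectively $\beta_0-\beta_n\geq (s_n-3)/(4a_{n+1})$. This is where the explicit values of $a_n, b, c$ and the various coprimality results from \Cref{well-formed} are essential.

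Finally, for the quasismooth singular points of $X$: the coprimality conditions $\gcd(a_i,a_n) = \gcd(a_i,a_{n+1}) = 1$ from \Cref{well-formed}, combined with the pure-power monomials $x_i^{s_i}$ in $f$, severely restrict which toric strata of $\mathbb{P}$ can contain quasismooth singularities of $X$. A stratum-by-stratum enumeration shows that the relevant cyclic quotient singularities on $X$ have denominators bounded essentially by $a_n$, so applying \Cref{prop:toricmld} at each gives a local mld comfortably exceeding $(s_n-1)/(2a_{n+1})$; the asymptotics $a_n\sim s_n^2/4$ and $a_{n+1}\sim s_n^3/8$ make this comparison straightforward. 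Combining with the calculation at $p_{n+1}$ concludes that $\mld(X) = \mld(X,p_{n+1})$ equals the claimed value.
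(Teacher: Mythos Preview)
Your overall architecture---split off the non-quasismooth point $p_{n+1}$, apply \Cref{thm:mldcomp} there, and handle the quasismooth locus via \Cref{prop:toricmld}---matches the paper exactly, and your identification of the optimal $\beta$ (equalizing the pure powers $x_i^{s_i}$ and the mixed monomial) recovers the paper's $\beta_{j_0}$ with $j_0=(s_n-1)/2$ (even) or $(s_n-3)/4$ (odd).

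Two genuine gaps remain, however. First, your lower bound at $p_{n+1}$ via case analysis on the minimizing monomial is missing the organizing idea, and the ``mixed monomial minimal'' case is not the bottleneck you make it out to be. The paper's key observation is a congruence: parametrize the relevant lattice points as $\beta_j$ with $\beta_j^i=\{ja_i/a_{n+1}\}$ for $1\le j\le a_{n+1}-1$; since every monomial of $f$ has degree $\equiv d\pmod{a_{n+1}}$ and $\sum_i a_i\equiv d+1$, one gets $\beta_j(x_0\cdots x_n)-\beta_j(f)\equiv j/a_{n+1}\pmod{\mathbb Z}$. Combined with positivity (which does come from the mixed monomial, as you note), this instantly gives the lower bound $j/a_{n+1}$ and reduces everything to showing that for the finitely many $j<j_0$ the difference actually exceeds $1$. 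That last step still requires a nontrivial estimate on $\sum_{i\le n-1}\beta_j^i$ using the weights modulo $a_{n+1}$, but the congruence is what makes the argument finite and tractable; a bare case split on which monomial is minimal does not obviously close, and your sketch of the pure-power and mixed cases does not address the real difficulty (small $j$, where $\beta_0=j/a_{n+1}$ is below the target and the mixed monomial is \emph{not} the minimizer).

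Second, your claim that the quasismooth cyclic quotient singularities ``have denominators bounded essentially by $a_n$'' is false. On the one-dimensional toric strata with both coordinates among $x_0,\ldots,x_{n-1}$, the group order is $\gcd(a_{i_1},a_{i_2})=d/(s_{i_1}s_{i_2})$, which can be as large as $d/6\sim s_n^4/48$, far exceeding $a_n\sim s_n^2/4$. The paper (Lemma~\ref{mld_otherpts}) does not bound the denominator; instead it isolates the single term in the sum \eqref{toricmldeq} corresponding to the weight $a_n$, uses again that the full sum is $\equiv j/(mr)\pmod{\mathbb Z}$, and observes that the $a_n$-term alone already exceeds $j/(mr)$ until $ja_n$ passes $2mr$, forcing the mld to be at least $2/a_n$. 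Your final asymptotic comparison $2/a_n>\mld(X,p_{n+1})$ then goes through, but only after this sharper argument.
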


\begin{proof}

For every $n$, we know from \Cref{well-formed} that
the hypersurface $X$ is normal and that it is
quasismooth away from the 
coordinate point of the last coordinate, $x_{n+1}$.
Away from this point, $X$ has only cyclic quotient 
singularities and we can use \Cref{prop:toricmld} to
compute the mld. At the non-quasismooth point, we'll apply
\Cref{thm:mldcomp} to compute the mld; this will turn out
to be the global minimum.

In the affine chart where $x_{n+1} \neq 0$, we may take $x_{n+1} = 1$,
and $X$ is locally a quotient of the hypersurface in affine space 
$\mathbb{A}^{n+1}_{x_0,\ldots,x_n}$ defined by the equation
$$f \coloneqq x_0^2 + x_1^3 + \cdots + x_{n-1}^{s_{n-1}} + x_n^{b} +
x_1 \cdots x_{n-1} x_n = 0$$
if $n$ is even, and
$$f \coloneqq x_0^2 + x_1^3 + \cdots + x_{n-1}^{s_{n-1}} + x_n^{b} +
x_1 \cdots x_{n-1} x_n^2 = 0$$
if $n$ is odd.  The quotient is by $\mu_{a_{n+1}}$, acting
by $\zeta \cdot (x_0,\ldots,x_{n}) = (\zeta^{a_0}x_0,\ldots,\zeta^{a_n}x_n)$
for $\zeta \in \mu_{a_{n+1}}$.
We already know $\mathbb{P}$ and $X$ are well-formed and normal,
so this hypersurface
of affine space satisfies the normality and well-formedness conditions
of \Cref{thm:mldcomp}.  Furthermore, by \Cref{rem:coefficients},
we can take the coefficients of 
the monomials in the example to be general, so that Newton non-degeneracy
is also automatic
(the base locus of any subset of the monomials in the
equation is contained in the complement of the open torus orbit).

Therefore, \Cref{thm:mldcomp} applies, and we can compute the mld 
of $X$ in this affine chart
by finding
the minimum of $\beta(x_0 \cdots x_n) - \beta(f)$ as $\beta$ varies over 
nonzero points of the lattice $N = \Z^{n+1} + \Z \cdot \frac{1}{a_{n+1}}
(a_0,\ldots,a_n)$ which are contained in the positive orthant (minus the
origin).
It's not difficult to show that this difference is a 
positive integer on nonzero integral points, so we can reduce to 
looking at points
$\beta_j \coloneqq (\beta_j^0,\ldots,\beta_j^n)$ where 
$$\beta_j^i = \left\{\frac{j a_i}{a_{n+1}}\right\}.$$
for each $i = 0,\ldots,n$.  (Here $\{x\}$ denotes the fractional part
$x - \lfloor x \rfloor$.)
and $j$ is a positive integer satisfying $1 \leq j \leq a_{n+1}-1$.
These points of $N$ are all in the interior of the cube $[0,1]^{n+1}$;
this is because $\gcd(a_i,a_{n+1}) = 1$
for each $i = 0, \ldots, n$, as shown in \Cref{well-formed}.  
Points in the interior correspond to exceptional divisors which 
have center equal to the non-quasismooth point, so the mld
we will find in the proof occurs at that point.  This is consistent
with the fact that the non-quasismooth point is an isolated 
singularity of $X$.
For the rest of the proof, we separately consider the cases 
of when $n$ is even and when $n$ is odd.

\noindent \textbf{Case 1:} $n$ is even.

Since the equation of $X$ is weighted
homogeneous of degree $d$ for the weights $a_0,\ldots,a_{n+1}$, each 
monomial $m$ of the polynomial $f$ above has degree $d$ modulo $a_{n+1}$
with the same weights.  This implies that for each monomial $m$ with 
nonzero coefficient in $f$, $\beta_j(m) \equiv jd/a_{n+1} \pmod \Z$.
However,
$$d = (-1 + a_n + a_{n+1})(s_n-1) \equiv (-1 + a_n)(s_n-1) 
= 2a_{n+1} + 2 \equiv 2 \pmod{a_{n+1}}.$$
It follows that $\beta_j(m) \equiv 2j/a_{n+1} \pmod \Z$.
We note that for $m = x_0^2$, $\beta_j(m)$ is actually equal to 
$2j/a_{n+1}$ for any $j = 1,\ldots,a_{n+1}-1$.  Indeed, the same 
reasoning as above gives that $a_0 = d/2 \equiv 1 \pmod{a_{n+1}}$
so $\beta_j^0 = j/a_{n+1}$.  Then $\beta_j(x_0^2)$ is twice this value.

Similarly, $a_0 + \cdots + a_n \equiv d + 1 \equiv 3 \pmod{a_{n+1}}$, so 
$\beta_j(x_0 \cdots x_n)$ is congruent to $3j/a_{n+1}$ modulo $\Z$.
The difference $\beta_j(x_0 \cdots x_n) - \beta_j(f)$ that we're interested in
must therefore have fractional part 
$$\frac{3j}{a_{n+1}} - \frac{2j}{a_{n+1}} = \frac{j}{a_{n+1}}.$$
Because $\beta(x_0 \cdots x_n) - \beta(x_1 \cdots x_n) = \beta^0 \geq 0$
for any $\beta$ in the positive orthant, we furthermore know that the difference is positive.
Hence $\beta_j(x_0 \cdots x_n) - \beta_j(f)$ is bounded below by $j/a_{n+1}$ 
for each $j$.
In particular, $X$ is klt.

This lower bound is achieved when $j = j_0 \coloneqq (s_n-1)/2$.  Indeed,
we already know
$\beta_{j_0}(x_0^2) = 2j_0/a_{n+1} = s_{n-1}/a_{n+1}$,
and since
$2j_0 < a_{n+1}$, this is a fraction between $0$ and $1$.  Thus, it actually equals 
the minimum $\beta_{j_0}(f)$. Next we verify 
$\beta_{j_0}(x_0 \cdots x_n) = 3j_0/a_{n+1} = (3s_n-3)/(2a_{n+1})$.
For $i = 0,\ldots,n-1$:
$$j_0 a_i = \frac{s_n-1}{s_i} \frac{d}{2} \equiv \frac{s_n-1}{s_i} \pmod{a_{n+1}},$$
so
$$\beta^i_{j_0} = \frac{s_n-1}{s_i a_{n+1}}.$$
For $i = n$, we have
$$j_0 a_n = \frac{1}{2}((s_n-1)a_n) = a_{n+1} + \frac{1}{2}(s_n+1) \equiv \frac{1}{2}(s_n+1) \pmod{a_{n+1}}.$$
This proves 
$$\beta^n_{j_0} = \frac{s_n+1}{2a_{n+1}}.$$
Adding these together yields
\begin{align*}
\beta_{j_0}(x_0 \cdots x_n) & = \frac{s_n-1}{s_0 a_{n+1}} + \cdots + \frac{s_n-1}{s_{n-1} a_{n+1}} + \frac{s_n+1}{2a_{n+1}} \\
& = \frac{(s_n-1)(s_n-2)}{(s_n-1)a_{n+1}} + \frac{s_n+1}{2a_{n+1}} \\
& = \frac{3s_n-3}{2a_{n+1}},
\end{align*}
as claimed. Subtracting,
$$\beta_{j_0}(x_0 \cdots x_n) - \beta_{j_0}(f) = \frac{s_{n}-1}{2a_{n+1}},$$
proving that there is a divisor over $X$
with log discrepancy equal to $(s_{n}-1)/(2a_{n+1})$.

It remains to see that this is actually the minimum value
of $\beta_j(x_0 \cdots x_n) - \beta_j(f)$ for any $j = 1,\ldots,a_{n+1}-1$. 
In light of the lower bound $j/a_{n+1}$ for this difference,
we need only verify that the difference is larger for $1 \leq j < j_0$.
For such a $j$,
\begin{align*}
\beta_j^0 + \cdots + \beta_j^{n-1} & \geq  \left\{\frac{1}{a_{n+1}}\left(\frac{dj}{s_0} + \cdots + \frac{dj}{s_{n-1}} \right) \right\} \\ 
& =\left\{\frac{1}{a_{n+1}}\frac{(s_n -2)dj}{s_n-1} \right\} \\ 
& = \left\{\frac{1}{a_{n+1}}(s_n-2)(-1 + a_n + a_{n+1})j \right\}.
\end{align*}
Modulo $a_{n+1}$ we have
\begin{align*}
(s_n-2)(-1 + a_n + a_{n+1}) & \equiv (s_n-2)(-1 + a_n) = (s_n-1)(-1+a_n) - (-1 + a_n) \\
& = 2a_{n+1} + 2 - (-1 + a_n) \equiv 3 - a_n \pmod{a_{n+1}}.
\end{align*}

Since $a_{n+1} > a_{n+1} + (3-a_n)j > 3j$ for $1 \leq j < j_0$ by definition of
$a_{n+1}$, 
$$\beta_j(x_0 \cdots x_n) = (\beta_j^0 + \cdots + \beta_j^{n-1}) + \beta_j^n 
\geq \frac{a_{n+1} + (3-a_n)j}{a_{n+1}} + \beta_j^n > \frac{3j}{a_{n+1}}.$$
Since $\beta_j(x_0 \cdots x_n)$ is equivalent to $3j/a_{n+1}$ modulo $\Z$, 
it follows that $\beta_j(x_0 \cdots x_n) \geq 1 + 3j/a_{n+1}$
for $1 \leq j < j_0$.  Therefore,
the difference $\beta_j(x_0 \cdots x_n) - \beta_j(f)$
is greater than $1$ for this range of $j$ values.
This completes the argument.

\noindent \textbf{Case 2:} $n$ is odd.

The argument for odd $n$ is very similar to the argument for even $n$.  We note that
\begin{align*}
    d & = (-1 + a_n + a_{n+1})(s_n-1) \equiv (-1 + a_n)(s_n-3) + 2(-1 + a_n) \\ 
  & \equiv 4a_{n+1} + 4 + 2(-1 + a_n) \equiv 2 + 2a_n \pmod{a_{n+1}}.
\end{align*}
For each monomial $m$ in $f$, we have that $\beta_j(m)$ is
equivalent to $(2 + 2 a_n)/a_{n+1}$ modulo $\Z$.  The sum of the 
weights is still $d + 1$, so it's again true that
$\beta_j(x_0 \cdots x_n) - \beta_j(f) \equiv j/a_{n+1} \pmod \Z$.
For any $j$, 
$$\beta_j(x_0 \cdots x_n) - \beta_j(f) \geq \beta_j(x_0 \cdots x_n) - \beta(x_0^2) = \beta^1_j + \cdots + \beta^n_j - \beta^0_j.$$  
But it's also true that
$$\beta_j(x_0 \cdots x_n) - \beta_j(f) \geq \beta(x_0 \cdots x_n) - \beta(x_1 \cdots x_{n-1} x_n^2) = \beta^0_j - \beta^n_j.$$
Since all the $\beta_j^i$ are positive for $1 \leq j \leq a_{n+1}-1$
(each point is in the interior of the unit cube), one of these two differences is 
positive.  It follows that $X$ is klt.

Next, we confirm that the claimed mld value is attained when $j = j_0 \coloneqq (s_n-3)/4$.  First, for $i = 0, \ldots, n-1$,
\begin{align*}
    j_0 a_i & = \frac{s_n-3}{4}\frac{d}{s_i} = \frac{(s_n-3)(s_n-1)(-1 + a_n + a_{n+1})}{4s_i} \equiv \frac{(s_n-3)(-1 + a_n)(s_n-1)}{4s_i} \\
    & = \frac{(4a_{n+1} + 4)(s_n-1)}{4s_i} \equiv \frac{s_n-1}{s_i} \pmod{a_{n+1}}.
\end{align*}
This shows 
$$\beta^i_{j_0} = \frac{s_n-1}{s_i a_{n+1}}.$$
Similarly, for $i = n$,
$$j_0 a_n = \frac{1}{4}(s_n-3)a_n = a_{n+1} + \frac{1}{4}(s_n+1) \equiv \frac{1}{4}(s_n+1) \pmod{a_{n+1}}.$$
This proves
$$\beta^n_{j_0} = \frac{s_n+1}{4a_{n+1}}.$$
Now we can compute
\begin{align*}
    \beta_{j_0}(x_0 \cdots x_n) & = \frac{s_n-1}{s_0 a_{n+1}} + \cdots + \frac{s_n-1}{s_{n-1}a_{n+1}} + \frac{s_n+1}{4 a_{n+1}} \\
    & = \frac{(s_n-1)(s_n-2)}{(s_n-1)a_{n+1}} + \frac{s_n+1}{4a_{n+1}} \\
    & = \frac{5s_n - 7}{4 a_{n+1}}.
\end{align*}
Similarly,
\begin{align*}
    \beta_{j_0}(x_1 \cdots x_{n-1} x_n^2) & = \frac{s_n-1}{s_1 a_{n+1}} + \cdots + \frac{s_n-1}{s_{n-1}a_{n+1}} + \frac{s_n + 1}{2 a_{n+1}} \\
    & = \frac{(s_n-3)(s_n-1)}{2(s_n-1)a_{n+1}} + \frac{s_n+1}{2a_{n+1}} \\
    & = \frac{s_n-1}{a_{n+1}}.
\end{align*}
Subtracting these gives 
$$\beta_{j_0}(x_0 \cdots x_n) - \beta_{j_0}(x_1 \cdots x_{n-1}x_n^2) = \frac{s_n-3}{4a_{n+1}}.$$
We already know that this is the correct value of the log discrepancy
modulo $\Z$ and this value is between $0$ and $1$,
so in fact $\beta_{j_0}(x_0 \cdots x_n) - \beta_{j_0}(f) = 
(s_n-3)/(4 a_{n+1})$.  It remains to check that 
this is in fact the lowest value attained for any $j$.
As in the even case, we only need to verify that no lower value is attained
in the range $1 \leq j < j_0$.  To see this, first note that, 
$$\beta_j(x_0 \cdots x_n) - \beta_j(f) \geq \beta_j(x_0 \cdots x_n)
- \beta_j(x_0^2) = \beta_j^n - \beta_j^0 + \beta_j^1 + \cdots + \beta_j^{n-1}.$$
Since $\beta_j^0 - \beta_j^n$ is also of the form 
$\beta_j(x_0 \cdots x_n) - \beta_j(m)$ for the monomial 
$m = x_1 \cdots x_{n-1} x_n^2$ of $f$, this difference is
equivalent to $j/a_{n+1}$ modulo $\Z$.
Since both $\beta_j^0$ and $\beta_j^n$ are between $0$ and $1$, 
it's simple to see that $\beta_j^n - \beta_j^0 \geq - j/a_{n+1}$
(and again is equivalent to it modulo $\Z$).

Therefore, the only way that the difference $\beta_j(x_0 \cdots x_n) - \beta_j(x_0^2)$
could achieve the minimum possible value $j/a_{n+1}$ is
if $\beta_j^1 + \cdots + \beta_j^{n-1} = 2j/a_{n+1}$.  
Suppose this holds, for some particular $j$.  This would imply that
$$\beta_j(x_1^3) = 3 \beta_j^1 \leq 3\frac{2j}{a_{n+1}} = \frac{6j}{a_{n+1}}.$$
In comparison, we can calculate
$$\beta_j^0 = \frac{j(1+a_n)}{a_{n+1}}$$
for any $j$ in this range we're considering, because 
$a_0 = d/2 \equiv 1 + a_n \pmod{a_{n+1}}$ and $j(1+a_n) < a_{n+1}$
when $1 \leq j < j_0$. But then
$$\beta_j(x_0 \cdots x_n) - \beta_j(f) \geq \beta_j(x_0 \cdots x_n) - 
\beta_j(x_1^3) \geq \frac{j(1+a_n)}{a_{n+1}} - \frac{6j}{a_{n+1}}
= \frac{(a_n-5)j}{a_{n+1}}.$$
This last expression is greater than $j/a_{n+1}$ because $a_n > 6$.
Since the discrepancy is equivalent to $j/a_{n+1}$ modulo $\Z$, it
must be greater than $1$ for the given value of $j$.

So far, we've demonstrated that $\mld(X,x)$ equals the value
from \Cref{hypersurfmld} when $x$ is the unique non-quasismooth point of $X$.
However, we also need to check that no smaller mld is attained at any
quasismooth point of the variety $X$.  The following lemma finishes the proof.

\begin{lemma}
\label{mld_otherpts}
Let $x \in X$ be the non-quasismooth point of the hypersurface $X$ defined
above.  Then $\mld(X \setminus x) \geq 2/a_n$.
\end{lemma}

\begin{proof}[Proof of \Cref{mld_otherpts}:]
We can take hypersurface $X$ to be a general
member of a linear series and it is quasismooth away from $x$
so the singularities on all toric strata apart from $x$
are the quotient singularities
described in, e.g., \cite[Proposition 2.6]{ETW}.  In particular, the 
singularity type is constant along the intersection of $X$ with any
toric stratum of $\mathbb{P}$, and we need only check the mld
of the singularities on $1$-dimensional toric strata in $\mathbb{P}$
(when neither coordinate point in its closure is in $X$)
or at coordinate points of $\mathbb{P}$ contained in $X$.
This is in the sense of \Cref{prop:toricmld}: every quotient
singularity on a larger stratum of $X$ already appears
somewhere on the affine toric variety corresponding to the
singularities on smaller strata.
In particular, we look at quotient singularities at
points $y$ in $X$ such that:
1) the only nonzero coordinates of $y$ are $x_{i_1}$ and $x_{i_2}$
for some $i_1,i_2$ among $0,\ldots,n-1$, or 
2) $y$ is the coordinate point of $x_n$.

In case 1), the singularity type is always of type $\frac{1}{mr}(\ldots,a_n,a_{n+1})$,
where $r = -1 + a_n + a_{n+1}$ and $mr$ the gcd of two of the weights among $a_0,\ldots,a_{n-1}$. 
The multiple $m$ here divides $s_n-1$, but is at most $(s_n-1)/6$. We
omit the other weights in the singularity, because it will actually 
be enough to consider the contributions of the last two in the 
formula of \Cref{prop:toricmld}.  The sum of
the weights in the singularity must equal $d + 1 \equiv 1 \pmod{mr}$,
so the sum in \eqref{toricmldeq} is equivalent to $j/(mr)$
modulo $\Z$ for each $j$.

When $j < mr/a_n$, $ja_n < mr$, so that
$$\frac{ja_n}{mr} - \bigg\lfloor \frac{ja_n}{mr} \bigg\rfloor = \frac{ja_n}{mr} > \frac{j}{mr}.$$
The left hand side is just one term in in the sum \eqref{toricmldeq},
so the total sum is greater than
$1$ for this $j$.  The first multiple of $a_n$ which is greater than $mr$
occurs when $j = m(s_n+1)/2$, for $n$ even, and when 
$j = m(s_n+1)/4$, for $n$ odd. Indeed, for the even case,
\begin{align*}
    \frac{1}{2}m(s_n+1)a_n - mr & = m\left(\frac{1}{2}(s_n+1)a_n + 1 - a_n - \frac{1}{2}((s_n-1)a_n - s_n - 1)\right) = \frac{1}{2}m(s_n + 3),
\end{align*}
which is between $0$ and $a_n$, since $m \leq (s_n-1)/6$.  Similarly,
for the odd case,
\begin{align*}
    \frac{1}{4}m(s_n+1)a_n - mr & = m\left(\frac{1}{4}(s_n+1)a_n + 1 - a_n - \frac{1}{4}((s_n-3)a_n - s_n - 1)\right) = \frac{1}{4}m(s_n + 5),
\end{align*}
which has the same property.  In both equations above, the right-hand side is 
the numerator of the difference $ja_n/(mr) - \big\lfloor ja_n/(mr) \big\rfloor$,
for the indicated special value of $j$.  Note that it is still greater than $j$
in both cases, so the sum in \eqref{toricmldeq} is greater than $1$ for this $j$ too.
As $j$ increases further, the contribution from $a_n$
in the sum in \eqref{toricmldeq} will continue to be larger than $j/(mr)$
until, at least, $ja_n$ reaches $2mr$.  
This proves that the minimal log discrepancy of any quotient singularity of this type is
at least
$$\frac{2mr/a_n}{mr} = \frac{2}{a_n},$$
as required.

The remaining singularity from case (2) is even simpler.  At the coordinate
point of $x_n$, the singularity type is $\frac{1}{a_n}(a_0,\ldots,a_{n-1})$.
By \Cref{well-formed}, $\gcd(a_i,a_n) = 1$ for each $i = 0,\ldots,n-1$,
so the sum \eqref{toricmldeq} always has $n$ positive terms for each 
$j = 1,\ldots,a_n-1$.  This already proves that the mld is at least
$n/a_n \geq 2/a_n$.
\end{proof}

\Cref{hypersurfmld} is now proved by noting that $2/a_n$ is larger than the mld
found at the non-quasismooth point for all $n$.
\end{proof}

\section{Computation of the alpha invariant and exceptionality}\label{sec: compute alpha}

In this section, we demonstrate that the Fano varieties
defined in \Cref{thm:hypersurface} and \Cref{thm:hypersurface-odd}
are exceptional.

Let $X$ be a hypersurface in $\mathbb{A}_{\mathbb{C}}^{n+1}$ that contains the origin.
For positive integers $c_0,\ldots,c_n$, we take $f:X' \rightarrow X$ to be the stack-theoretic weighted blow-up of $X\subset \mathbb{A}^{n+1}$ at the origin with the given weights $c=(c_0,\ldots,c_n)$. Section 3.4 in \cite{ATW} gives explicit coordinate charts. We define the \textit{weighted tangent cone} $X^c$ to be the exceptional divisor in $X'$. In particular, we view $X^c$ as a hypersurface in the stack $\mathbb{P}^{n}(c_0,\ldots,c_n)=[(\mathbb{A}^{n+1}-0)/\mathbb{G}_{\on{m}}]$.

For positive integers $c_0,\ldots,c_n$, the \textit{weighted multiplicity} $\mathrm{mult}_c{S}$ of a closed subscheme $S\subset \mathbb{A}^{n+1}$ at the origin is defined to be the degree of the weighted tangent cone $S^c$ as a substack of the weighted projective space  $\mathbb{P}^{n}(c_0,\ldots,c_n)$. Equivalently, let $F^l\mathcal{O}(S)$ be the linear span of the monomials $x^{I}$ with $\sum^n_{j=0}c_ji_j\geq l$. This gives a decreasing filtration of the ring $\mathcal{O}(S)$ of regular functions. Then the weighted multiplicity is the limit 
\[\lim\limits_{l\to \infty}\frac{\mathrm{dim}(\mathcal{O}(S)/F^l\mathcal{O}(S))}{l^m/m!},\]
where $m$ is the dimension of $S$. If all weights are $1$, this limit equals the usual multiplicity of $S$ at the origin \cite[Example 4.3.1]{Fulton}.

%Let $S\subset \mathbb{A}^{n+1}$ be the general hypersurface with equation spanned by a given set of monomials. Ishii and Prokhorov gave a method to check when the general hypersurface $S$ has canonical singularities in the following theorem \cite[Proposition 2.9]{IshiiProkhorov}.
%The \textit{Newton polyhedron} of a finite subset $I\subset \mathbb{R}^{n+1}$ is defined to be the convex hull of $I$ plus the positive orthant, $(\mathbb{R}^{\geq 0})^{n+1}$.

\begin{comment}
\begin{theorem}\label{IshiiProkhorov-check-canonical}
Let $I$ be a finite set
of monomials in $\mathbb{N}[x_0,\cdots,x_n]$, viewed as a subset of $\mathbb{N}^{n+1}$. The zero set of a general linear combination of these monomials in $\mathbb{A}_{\mathbb{C}}^{n+1}$ is denoted by $S$. The hypersurface $S$ is normal if for every $0\leq i<j\leq n$, there exists a monomial in $I$ that contains neither $s_i$ nor $x_j$. The hypersurface $S$ has canonical singularities if $(1,\cdots,1)$ is in the interior of the Newton polyhedron of $I\subset \mathbb{R}^{n+1}$. If $I$ contains no monomial of degree $1$, the converse holds.
\end{theorem}

Theorem 2.1 in \cite{Totaro23} uses Theorem 
\ref{IshiiProkhorov-check-canonical} to show certain hypersurfaces in weighted projective space have klt singularities. We use the similar methods to show the  weighted hypersufaces are klt in Lemma \ref{even-klt-Newton} and Lemma \ref{odd-klt-Newton}.
\end{comment}

\begin{lemma}\label{even-klt-Newton}
For each even integer $n\geq 2$, the hypersurface $S$ in 
$\mathbb{A}^{n+1}$ defined by the equation 
$$x_0^2+x_1^3+\cdots+x_{n-1}^{s_{n-1}}+x_1\cdots x_{n}=0$$ 
is canonical, and in particular klt.
\end{lemma}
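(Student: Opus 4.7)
The plan is to apply \Cref{thm:mldcomp} with the trivial group $G = \{1\}$, which reduces the task to showing
\[
\mld(S) \;=\; \inf_{0 \neq \beta \in \Z_{\geq 0}^{n+1}} \big[\beta(x_0\cdots x_n) - \beta(f)\big] \;\geq\; 1,
\]
since canonical is equivalent to $\mld \geq 1$. The well-formedness hypothesis of the theorem is automatic because $\A^{n+1}/G = \A^{n+1}$ is smooth.

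To verify the remaining hypotheses of \Cref{thm:mldcomp}, I would first establish normality. The hypersurface $S$ is Cohen--Macaulay, and $f$ is irreducible since, viewed as a quadratic in $x_0$, its non-$x_0$ part is not a square in $\C[x_1,\ldots,x_n]$ (the $x_1^3$ term has odd degree in $x_1$). A Jacobian calculation using $\partial_{x_0} f = 2x_0$ and $\partial_{x_n} f = x_1\cdots x_{n-1}$ together with the remaining partials confines the singular locus to the coordinate line $\{x_0 = x_1 = \cdots = x_{n-1} = 0\}$, which has codimension at least $2$ in the $n$-dimensional $S$; Serre's criterion then gives normality. For Newton non-degeneracy, the vertices of $\Gamma_+(f)$ are $v_0 = 2e_0$, $v_i = s_i e_i$ for $i = 1,\ldots,n-1$, and $v_n = e_1 + \cdots + e_n$. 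For each compact face $\gamma$: if $v_0 \in \gamma$, then $\partial_{x_0} f_\gamma = 2 x_0$ is nonvanishing on the open torus $T$; if $v_0 \notin \gamma$ but $v_n \in \gamma$, then every other monomial of $f_\gamma$ is a pure power $x_i^{s_i}$ with $i \leq n-1$ and hence independent of $x_n$, so $\partial_{x_n} f_\gamma = x_1 \cdots x_{n-1}$ is nonvanishing on $T$; and if neither $v_0$ nor $v_n$ lies in $\gamma$, then $f_\gamma = \sum_{i \in J} x_i^{s_i}$ for some nonempty $J \subseteq \{1,\ldots,n-1\}$, and $\partial_{x_i} f_\gamma = s_i x_i^{s_i-1}$ is nonvanishing on $T$ for any $i \in J$.

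Finally, I would estimate the infimum using only the two monomials $x_0^2$ and $x_1\cdots x_n$ of $f$:
\[
\beta(x_0\cdots x_n) - \beta(f) \;\geq\; \max\!\left\{\beta_1 + \cdots + \beta_n - \beta_0,\; \beta_0\right\}.
\]
For any nonzero $\beta \in \Z_{\geq 0}^{n+1}$, this maximum is at least $1$: if $\beta_0 \geq 1$ the second entry suffices, and otherwise $\beta_0 = 0$ forces $\beta_1 + \cdots + \beta_n \geq 1$. Hence $\mld(S) \geq 1$, so $S$ is canonical (and therefore klt). The main technical step is the Newton non-degeneracy check, but the structure of $f$, in which every monomial except $x_1 \cdots x_n$ is a pure power of a single variable, makes the case analysis short.
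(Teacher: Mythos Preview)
Your proof is correct and rests on the same Newton-polytope machinery (\Cref{thm:mldcomp}) as the paper, but the two arguments differ tactically. The paper first replaces $S$ by an isomorphic hypersurface with general coefficients (via \Cref{rem:coefficients}), so that both Newton non-degeneracy and smoothness in codimension $1$ come for free from Bertini-type reasoning; it then applies \Cref{cor:klt_hypersurf} by exhibiting the convex combination
\[
\tfrac{5}{12}(2,0,\ldots,0)+\tfrac{1}{6}(0,3,0,\ldots,0)+\tfrac{5}{12}(0,1,\ldots,1)
\]
to place $(1,\ldots,1)$ in the interior of $\Gamma_+(f)$, concludes klt, and upgrades to canonical because $S$ is Gorenstein. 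You instead verify non-degeneracy and normality explicitly for the given coefficients and read off $\mld(S)\ge 1$ directly from the formula in \Cref{thm:mldcomp}, using only the two monomials $x_0^2$ and $x_1\cdots x_n$. Your route is more self-contained and bypasses both the general-coefficients trick and the Gorenstein step; the paper's route avoids the face-by-face case analysis.

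One small gap: for $n=2$ the coordinate line $\{x_0=x_1=0\}$ sits inside $S$ with codimension $1$, not $\ge 2$, so your normality argument as written does not cover that case. The fix is immediate: when $n=2$ the remaining partial $\partial_{x_1} f = 3x_1^2 + x_2$ forces $x_2=0$ as well, so the singular locus is just the origin. Adding this sentence closes the gap.
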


\begin{proof}
The number of monomials in this equation is the same as the number
of variables, so by the same reasoning as in \Cref{rem:coefficients},
we may actually
replace $S$ with a hypersurface defined by a general linear
combination of the monomials in the equation above (this new
hypersurface is isomorphic to $S$).
The base locus of the linear system generated by the monomials is
the line spanned by $(0,\ldots,0,1)$, so by
Bertini's theorem, the surface $S$ is smooth in codimension $1$. 
It is also Cohen-Macaulay since it is a hypersurface
in affine space, so it is normal by Serre's criterion.

Now we show that the convex hull of the points 
$(2,0,\ldots,0),(0,3,0,\ldots,0),\ldots,(0,\ldots,0,s_{n-1},0)$ 
and $(0,1,\ldots,1)$ in $\mathbb{R}^{n+1}$ contains a point with
all coordinates less than $1$. Indeed, the point 
$$\frac{5}{12}(2,0,\ldots,0)+\frac{1}{6}(0,3,0,\ldots,0)+\frac{5}{12}(0,1,\ldots,1)$$
has all coordinates less than $1$. 
So the Newton polytope of $x_0^2,x_1^3,\ldots,x^{s_{n-1}}_{n-1}$ and $x_1\cdots x_n$ 
contains $(1,\ldots,1)$ in the interior.
\Cref{cor:klt_hypersurf} implies that $S$ is klt.  Since $S$ is
Gorenstein, it is canonical.
\end{proof}

\begin{lemma}\label{odd-klt-Newton}
For each odd integer $n\geq 3$, the hypersurface 
$S$ in $\mathbb{A}^{n+1}$ defined by the equation 
$$x_0^2+x_1^3+\cdots+x_{n-1}^{s_{n-1}}+x_1\cdots x^2_{n}=0$$ is canonical, in particular klt.
\end{lemma}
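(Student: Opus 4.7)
The plan is to follow the pattern established in the proof of \Cref{even-klt-Newton} verbatim, as the only structural change in the odd case is that the mixed monomial is $x_1 \cdots x_{n-1} x_n^2$ rather than $x_1 \cdots x_n$, contributing the exponent vector $(0, 1, 1, \ldots, 1, 2)$ instead of $(0, 1, \ldots, 1)$.

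First I would invoke the same reasoning as in \Cref{rem:coefficients}: the defining equation has $n+1$ monomials in the $n+1$ variables $x_0, \ldots, x_n$, so after scaling the coordinates we may replace $S$ with the hypersurface defined by a general linear combination of these monomials. The base locus of the resulting linear system on $\mathbb{A}^{n+1}$ consists of the $x_n$-axis, since setting $x_0 = x_1 = \cdots = x_{n-1} = 0$ kills every monomial (each involves either $x_0$ or one of $x_1, \ldots, x_{n-1}$). Bertini's theorem then yields smoothness of $S$ in codimension one, and combined with the Cohen--Macaulay property of hypersurfaces in affine space, Serre's criterion gives normality.

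Next I would verify that $(1, 1, \ldots, 1)$ lies in the interior of the Newton polytope $\Gamma_+(f)$. The vertices of interest are $(2, 0, \ldots, 0), (0, 3, 0, \ldots, 0), \ldots, (0, \ldots, 0, s_{n-1}, 0)$, together with $(0, 1, 1, \ldots, 1, 2)$. Taking the same convex combination as in the even case,
$$\tfrac{5}{12}(2, 0, \ldots, 0) + \tfrac{1}{6}(0, 3, 0, \ldots, 0) + \tfrac{5}{12}(0, 1, \ldots, 1, 2) = \bigl(\tfrac{5}{6}, \tfrac{11}{12}, \tfrac{5}{12}, \ldots, \tfrac{5}{12}, \tfrac{5}{6}\bigr),$$
yields a point whose coordinates are all strictly less than $1$. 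The crucial check specific to the odd case is that doubling $5/12$ still gives $5/6 < 1$, so the extra factor of $x_n$ causes no obstruction. Because $\Gamma_+(f)$ is the Minkowski sum of this convex hull with $\R_{\geq 0}^{n+1}$, the point $(1, \ldots, 1)$ lies in its interior.

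Finally, \Cref{cor:klt_hypersurf} applies and shows $S$ is klt. Since $S$ is a hypersurface in $\mathbb{A}^{n+1}$, it is Gorenstein, and a Gorenstein klt singularity is canonical. I do not anticipate any genuine obstacle here: the only nontrivial arithmetic is the single convex combination above, and that was already identified in the even case.
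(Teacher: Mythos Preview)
Your proposal is correct and follows the paper's proof essentially verbatim: the paper also reduces to general coefficients via \Cref{rem:coefficients}, uses Bertini plus Serre's criterion for normality, exhibits the identical convex combination $\tfrac{5}{12}(2,0,\ldots,0)+\tfrac{1}{6}(0,3,0,\ldots,0)+\tfrac{5}{12}(0,1,\ldots,1,2)$, and concludes via \Cref{cor:klt_hypersurf}. Your write-up is slightly more explicit in computing the coordinates of the convex combination and in spelling out the Gorenstein-hence-canonical step, but the argument is the same.
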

\begin{proof}
The same argument from \Cref{even-klt-Newton} shows that 
$S$ is normal and that we may replace $S$ by the hypersurface
defined by the same equation but with general coefficients.

We show that the convex hull of the points
$(2,0,\ldots,0),(0,3,0,\ldots,0),\ldots,(0,\ldots,0,s_{n-1},0)$ and $(0,1,\ldots,1,2)$ 
in $\mathbb{R}^{n+1}$ contains a point with all coordinates less than $1$. 
Indeed, the point 
$$\frac{5}{12}(2,0,\ldots,0)+\frac{1}{6}(0,3,0,\ldots,0)+\frac{5}{12}(0,1,\ldots,1,2)$$ 
has all coordinates less than $1$. 
So the Newton polytope of $x_0^2,x_1^3,\ldots,x^{s_{n-1}}_{n-1}$ and $x_1\cdots x^2_n$
contains $(1,\ldots,1)$ in the interior. 
\Cref{cor:klt_hypersurf} implies that $S$ is klt, in particular canonical.
\end{proof}

In weighted projective space $\mathbb{P}=\mathbb{P}(a_0,\ldots,a_n)$, the multiplicity of an irreducible closed substack (or an effective algebraic cycle) at a point is defined to be the multiplicity at a corresponding point of its inverse image in any orbifold chart $\mathbb{A}^n\rightarrow [\mathbb{A}^n/ \mu_{a_i}]\cong\{x_i\neq 0\}\subset \mathbb{P}$. It is independent of the choice of $i$ since different orbifold charts are \'etale-locally isomorphic.
We can use the following criterion to check that a pair is canonical using multiplicity.

\begin{theorem}{{{\cite[Claim 2.10.4]{Kollar2013}}}}
\label{claim2.10.4}
    Let $X$ be a regular scheme and $\Delta$ an effective $\mathbb{Q}$-divisor. Then $(X,\Delta)$ is canonical if $\mathrm{mult}_p\Delta\leq 1$ for every point $p$ in $X$.
\end{theorem}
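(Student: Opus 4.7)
The plan is to verify that every exceptional prime divisor $E$ over $X$ has $\mathrm{disc}_E(X,\Delta) \geq 0$, which is the condition for the pair to be canonical. The strategy is to construct a log resolution of $(X, \Delta)$ as a sequence of blowups at smooth centers and argue inductively that the multiplicity hypothesis propagates through each blowup, yielding nonnegative discrepancies for every exceptional divisor extracted along the way.

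First, since $X$ is regular (and in this paper's applications a complex variety), Hironaka's theorem provides a log resolution $\mu : X' \to X$ of $(X, \Delta)$ realized as a finite composition $\mu = \mu_k \circ \cdots \circ \mu_1$ of blowups at smooth centers $Z_i \subset X^{(i-1)}$ of codimensions $c_i \geq 2$. Each such blowup produces an exceptional divisor $E_i$ on $X^{(i)}$, and the standard blowup formulas (in the spirit of \cite[Proposition 8.3.11]{Ishii} as used in the proof of \Cref{thm:mldcomp}) yield
$$\mathrm{disc}_{E_i}(X, \Delta) = c_i - 1 - \mathrm{mult}_{Z_i}(\Delta^{(i-1)}),$$
where $\Delta^{(i-1)}$ is the log transform determined by $K_{X^{(i-1)}} + \Delta^{(i-1)} = (\mu_1 \circ \cdots \circ \mu_{i-1})^*(K_X + \Delta)$. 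Since $X'$ resolves the pair, it suffices to verify $\mathrm{disc}_{E_i} \geq 0$ at every stage $i$.

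The key inductive claim I would prove is that the multiplicity bound $\mathrm{mult}_p \Delta^{(i)} \leq 1$ at every $p \in X^{(i)}$ propagates from one stage to the next. Writing $\Delta^{(i)} = \widetilde{\Delta}^{(i-1)} + (m_i - c_i + 1)E_i$, with $\widetilde{\Delta}^{(i-1)}$ the strict transform and $m_i = \mathrm{mult}_{Z_i} \Delta^{(i-1)}$, the inductive hypothesis $m_i \leq 1 \leq c_i - 1$ forces the coefficient of $E_i$ in $\Delta^{(i)}$ to be nonpositive, so the effective part of $\Delta^{(i)}$ is just the strict transform. I would then argue via a local coordinate calculation at a point $p' \in E_i$ over $p = \mu_i(p') \in Z_i$ that the multiplicity of the strict transform at $p'$ is bounded by $\mathrm{mult}_p \Delta^{(i-1)} \leq 1$, using the smoothness of $Z_i$ in $X^{(i-1)}$; away from $E_i$ the morphism $\mu_i$ is an isomorphism and the bound persists trivially. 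Combining this with the formula above gives $\mathrm{disc}_{E_i}(X, \Delta) \geq c_i - 2 \geq 0$ at every stage, so $(X, \Delta)$ is canonical.

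The main obstacle is the careful local verification of this multiplicity propagation. The informal slogan "smooth blowups do not increase multiplicities on the strict transform" is easy to state, but making it precise requires tracking how the contributions of the several components of $\Delta^{(i-1)}$ passing through $Z_i$, possibly with different orders of tangency, combine at an arbitrary point of the exceptional locus, and then iterating this control across all the blowups of a resolution. This combinatorial bookkeeping is precisely the substance of the proof of \cite[Claim 2.10.4]{Kollar2013}.
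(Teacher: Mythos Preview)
The paper does not give a proof of this statement; it simply quotes it from \cite[Claim 2.10.4]{Kollar2013} and uses it as a black box. So there is no ``paper's own proof'' to compare against, only Koll\'ar's original argument.

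Your overall strategy---resolve by a tower of smooth blowups and inductively control multiplicities of the log pullback---is the standard one and is in the spirit of Koll\'ar's proof. You also correctly isolate the heart of the matter: the propagation of the bound $\mathrm{mult}_p\Delta^{(i)}\le 1$ through each blowup. But the slogan you rely on, that ``smooth blowups do not increase multiplicities on the strict transform,'' is \emph{false} for positive-dimensional centers. For instance, take $D=\{x+z^2=0\}$ in $\mathbb{A}^3$ and blow up $Z=\{x=y=0\}$. Here $\mathrm{mult}_0 D=1$ and $\mathrm{mult}_Z D=0$, so the strict transform equals $\pi^*D$; in the chart $x=uv,\ y=v$ the equation becomes $uv+z^2=0$, which has multiplicity $2$ at the origin of the chart. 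Thus your inductive step, as written, breaks down the moment the resolution algorithm blows up a center not contained in (or not equimultiple for) the divisor.

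What rescues the argument is one of two refinements. Either you first cut by general hyperplanes and reduce to $\dim X=2$, where every blowup is at a closed point and the nonincrease of multiplicity on the strict transform is elementary; this is essentially what Koll\'ar does. Or you track the multiplicity of the full log pullback $\Delta^{(i)}=\widetilde{\Delta}^{(i-1)}+(m_i-c_i+1)E_i$ rather than of the strict transform alone, and prove the sharper inequality $\mathrm{mult}_{p'}(\pi^*\Delta)\le \mathrm{mult}_p(\Delta)+(c-1)$, which does hold and exactly compensates for the exceptional contribution. Either route closes the gap, but neither is the ``combinatorial bookkeeping'' you describe; the issue is a genuine geometric inequality, not just careful accounting.
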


\begin{comment}

\CW{add definition of "multiplicity"
in this context (a substack of a weighted projective space).}

\CW{move this sentence
to an earlier section on notation. Define
"exceptional" and the alpha-invariant, and 
give a reference for Birkar's result
that "exceptional" is equivalent to $\alpha(X)>1$.
Another definition that could go there
is the degree of a substack of a WPS.}

\CW{write out Claim 2.10.4 (or the part needed) as a theorem, maybe in some earlier section. }
\end{comment}

Johnson and Koll\'ar gave the following bound for the multiplicity
of an irreducible closed substack of weighted projective space.

\begin{lemma}{{{\cite[Proposition 11]{JohnsonKollar2001}}}}
\label{multi-bound}
Let $a_0\geq \ldots \geq a_{n}$ be positive integers and 
$T$ be an irreducible closed substack of $\mathbb{P}^n(a_0,\ldots,a_n)$
of dimension $m<n$.
Then the multiplicity of $T$ at every point is at most $(a_0\cdots a_m)\deg(T)$.
\end{lemma}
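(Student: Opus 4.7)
The plan is to bound $\mathrm{mult}_p T$ by intersection theory: cut $T$ with $m$ hypersurfaces $D_1,\ldots,D_m$ through $p$, each smooth at $p$ and not containing $T$, whose degree product is at most $a_0\cdots a_m$, and then apply the weighted B\'ezout theorem together with the standard mult-length comparison to obtain
\[
\mathrm{mult}_p T \;\leq\; \mathrm{length}_p(T\cap D_1\cap\cdots\cap D_m) \;\leq\; \deg(T)\prod_{i=1}^{m}\deg(D_i) \;\leq\; (a_0\cdots a_m)\deg(T).
\]
The B\'ezout step is formal on the stack: each degree-$d$ hypersurface represents the class $d\cdot c_1(\mathcal{O}(1))$, and iteratively intersecting with the cycle class of $T$ gives a zero-cycle of degree $\prod\deg(D_i)\cdot\deg(T)$.

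The first step is a reduction: if $T$ is contained in a coordinate hyperplane $\{x_j=0\}\cong \mathbb{P}(a_0,\ldots,\widehat{a_j},\ldots,a_n)$, replace the ambient space with this subspace and argue by induction on $n$ (dropping a weight $a_j$ with $j\geq m+1$ preserves the bound, while dropping one with $j\leq m$ only tightens it). So I may assume $T$ is not contained in any coordinate hyperplane.

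The main construction step produces the $D_i$. For every index $j$ with $p_j=0$, the coordinate hyperplane $\{x_j=0\}$ has degree $a_j$, is smooth at $p$, and by the reduction does not contain $T$. If $p$ has at least $m$ zero coordinates among the indices $\{0,\ldots,m\}$, I pick the corresponding $m$ coordinate hyperplanes. Otherwise, for each remaining index $i\leq m$ with $p_i\neq 0$, I must build a degree-$a_i$ hypersurface through $p$: choose a lift $\tilde p\in\mathbb{A}^{n+1}\setminus\{0\}$ and take a generic $\C$-linear combination of weighted-homogeneous monomials of degree $a_i$ vanishing at $\tilde p$. Such a combination exists because the space of degree-$a_i$ monomials contains at least $x_i$ together with further monomials in variables of smaller weight (afforded by the ordering $a_0\geq\cdots\geq a_n$); generic choices within this linear system have nonzero gradient at $\tilde p$ and, by Bertini, do not contain $T$.

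The main obstacle is this construction step: one must simultaneously ensure (i) the chosen degrees lie in $\{a_0,\ldots,a_m\}$ with each used at most once, so that $\prod\deg(D_i)\leq a_0\cdots a_m$; (ii) the $D_i$ meet $T$ properly, so that the intersection is zero-dimensional at $p$; and (iii) no $D_i$ contains $T$. The ordering $a_0\geq\cdots\geq a_n$ is crucial precisely because it guarantees a rich enough supply of weighted-homogeneous polynomials of the smaller degrees $a_i$ with $i\leq m$ to run the Bertini argument. Once the $D_i$ are in place, the conclusion is immediate from B\'ezout.
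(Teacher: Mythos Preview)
The paper does not supply its own proof of this lemma; it is quoted verbatim from \cite[Proposition~11]{JohnsonKollar2001}, so there is nothing in the text to compare against beyond the citation. That said, your overall plan---cut $T$ by $m$ hypersurfaces $D_1,\ldots,D_m$ through $p$ that are smooth there and meet $T$ properly, then combine $\mathrm{mult}_p T\le\mathrm{length}_p(T\cap D_1\cap\cdots\cap D_m)$ with weighted B\'ezout---is indeed the Johnson--Koll\'ar strategy, and your reduction to the case where $T$ lies in no coordinate hyperplane is fine.

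There is, however, a genuine gap in your construction of the $D_i$. You assert that for $i\le m$ with $p_i\neq 0$ the linear system $|\mathcal{O}(a_i)|$ contains, besides $x_i$, ``further monomials in variables of smaller weight (afforded by the ordering $a_0\ge\cdots\ge a_n$)''; but that ordering does not force $a_i$ to be a nonnegative integer combination of $a_{i+1},\ldots,a_n$. A concrete failure: in $\mathbb{P}(5,4,3)$ the only monomial of weighted degree $5$ is $x_0$ and the only one of degree $4$ is $x_1$. Taking $m=1$ and $p=[1:1:1]$, there is no hypersurface whatsoever of degree $a_0=5$ or $a_1=4$ passing through $p$, so your recipe produces no $D_1$ at all. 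The remedy is to abandon the constraint $\deg D_i\in\{a_0,\ldots,a_m\}$ and allow degrees of the form $a_ja_k$: when $p_j,p_k\neq 0$ the binomial $p_j^{a_k}x_k^{a_j}-p_k^{a_j}x_j^{a_k}$ vanishes at $p$, is smooth there, and has degree $a_ja_k$. Organising a mixture of such binomials with coordinate hyperplanes (for indices where $p$ vanishes) so that the product of the $m$ degrees stays below $a_0\cdots a_m$, while keeping the divisors transverse at $p$ and not containing $T$, is precisely the combinatorial bookkeeping carried out in \cite{JohnsonKollar2001}, and it is what your sketch leaves out.
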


Now we prove that the hypersurfaces from 
\Cref{thm:hypersurface} and \Cref{thm:hypersurface-odd}
are exceptional.

\begin{theorem}\label{thm: exceptionality}
    For every even integer $n \geq 2$, the $n$-dimensional klt Fano variety $X$ defined in 
    \Cref{thm:hypersurface} is exceptional. The Fano $n$-fold $X$ has $\alpha$-invariant $$1<\frac{a_n a_{n+1}}{d}\leq \alpha(X)\leq \frac{(s_n-2)a_{n+1}}{s_n-1},$$ where $a_n a_{n+1}/d \sim s_{n}/4$ and $(s_n-2)a_{n+1}/(s_n-1) \sim s_n^3/8$.
\end{theorem}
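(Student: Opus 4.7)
My plan is to establish both bounds on $\alpha(X)$ stated in the theorem. The lower bound will directly imply exceptionality: if $\alpha(X)>1$, then for every $D\in|-K_X|_{\Q}$ the pair $(X,D)$ is automatically klt, so $X$ is exceptional. Since $a_na_{n+1}/d$ is asymptotic to $s_n/4$, which exceeds $1$ for every $n\geq 2$ by a direct substitution using $d=(s_n-1)(-1+a_n+a_{n+1})$, the lower bound does give exceptionality. The strategy follows the weighted cone approach of \cite[Section 4]{Totaro23}, but, as emphasized in the introduction, demands more accurate multiplicity control at every point that may attain the infimum defining $\alpha(X)$.

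For the lower bound, I fix an arbitrary effective $\Q$-divisor $D\sim_{\Q}-K_X=\mathcal{O}_X(1)$ and verify that $(X,cD)$ is lc at each point, for $c=a_na_{n+1}/d$. Away from the non-quasismooth point $p_{n+1}$, $X$ has only toric cyclic quotient singularities, so \'etale-locally it is the quotient of a smooth hypersurface of affine space by a cyclic group, and lc-ness can be tested on the cover. On the cover, for each irreducible component of the pulled-back divisor, I would apply the Johnson--Koll\'ar multiplicity bound (\Cref{multi-bound}): viewing the component as a substack of the ambient weighted projective space with weighted degree dictated by $D\sim_{\Q}\mathcal{O}_X(1)$, the resulting multiplicity estimate feeds into Koll\'ar's criterion (\Cref{claim2.10.4}) to conclude that $cD$ has multiplicity at most $1$, whence $(X,cD)$ is canonical at the point in question.

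The serious case is the non-quasismooth point $p_{n+1}$. In a neighborhood, $X$ is the quotient of the affine hypersurface $Y=\{f_0=0\}\subset\mathbb{A}^{n+1}$, with $f_0=x_0^2+x_1^3+\cdots+x_{n-1}^{s_{n-1}}+x_n^{b}+x_1\cdots x_{n-1}x_n$, by $\mu_{a_{n+1}}$, and $Y$ is canonical by \Cref{even-klt-Newton}. I would perform the weighted blow-up of $\mathbb{A}^{n+1}$ at the origin with weights $c=(a_0,\ldots,a_n)$, which are compatible with both the $\mu_{a_{n+1}}$-action and the weighted-homogeneity of $f_0$, and study the weighted tangent cone $Y^{c}\subset\mathbb{P}^n(a_0,\ldots,a_n)$ together with the weighted tangent cone of the pullback $\tilde D$ of $D$. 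Johnson--Koll\'ar's bound applied component-wise gives a weighted multiplicity estimate on $\tilde D$; combining it with canonicity of $Y$, the adjunction data encoded by $f_0$, and the behavior of log discrepancies under the finite quotient $Y\to X$ yields lc-ness of $(X,cD)$ at $p_{n+1}$. The main technical obstacle is that the weighted multiplicity bound must come out exactly to $c=a_na_{n+1}/d$ rather than a weaker constant, requiring careful bookkeeping of degrees, weights, and the specific form of $f_0$.

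For the upper bound, I would exhibit a suitable element $D_0\in|-K_X|_{\Q}$, for instance a rational combination of the coordinate divisors $\{x_i=0\}\cap X$ chosen so that its pullback to $Y$ has maximal order of vanishing at the origin with respect to the weighted blow-up above. Computing the lct of $D_0$ at $p_{n+1}$ via \Cref{thm:mldcomp} applied to the toroidal resolution arising from that blow-up then yields $(s_n-2)a_{n+1}/(s_n-1)$, giving the desired upper bound, and the asymptotics in the statement follow from the same substitutions used to verify $a_na_{n+1}/d>1$.
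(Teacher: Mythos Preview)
Your overall architecture matches the paper's, but the key step at the non-quasismooth point $p_{n+1}$ is set up incorrectly, and as written it will not go through.

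First, the polynomial $f_0=x_0^2+\cdots+x_{n-1}^{s_{n-1}}+x_n^{b}+x_1\cdots x_{n-1}x_n$ is \emph{not} weighted-homogeneous for the weights $(a_0,\ldots,a_n)$: the monomials $x_i^{s_i}$ have $a$-degree $d$, but $x_n^{b}$ has $a$-degree $d-a_{n+1}$ and $x_1\cdots x_n$ has $a$-degree $d-ca_{n+1}$. Consequently the $a$-weighted tangent cone of $Y$ at the origin is the reducible hypersurface $\{x_1\cdots x_n=0\}$, which is not klt, and the machinery of \cite[Lemma~4.1]{Totaro23} cannot be applied with this choice. Relatedly, \Cref{even-klt-Newton} does not say that $Y$ is canonical; it concerns the hypersurface $x_0^2+\cdots+x_{n-1}^{s_{n-1}}+x_1\cdots x_n=0$ (no $x_n^{b}$ term), which is not $Y$. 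The paper's crucial move is to \emph{change} the weights: setting $r=-1+a_n+a_{n+1}$ and $b_i=(s_n-1)/s_i$ for $i\leq n-1$, $b_n=(s_n+1)/2$, the $b$-weighted tangent cone $X^c\subset\mathbb{P}(b_0,\ldots,b_n)$ is precisely the hypersurface of \Cref{even-klt-Newton} (the $x_n^{b}$ term drops out), which is klt. One then applies \cite[Lemma~4.1]{Totaro23} to reduce to showing $(X^c,\nu_nD^c)$ is lc with $K_{X^c}+\nu_nD^c$ anti-nef; this in turn requires a \emph{second} layer of analysis at the singular point $[0:\cdots:0:1]$ of $X^c$, handled via \cite[Lemma~5.1]{Totaro23}. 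Your sketch collapses this two-step weighted-cone recursion into a single blow-up with the wrong weights, and omits the nested lc verification on $X^c$.

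For the upper bound, the paper does not work at $p_{n+1}$ at all: it takes $D_0=(1/a_{n+1})\{x_{n+1}=0\}$, whose worst singularity is the Fermat-type point $p_n=[0:\cdots:0:1:0]$ (the $x_n$-coordinate point), and reads off $\mathrm{lct}=(s_n-2)/(s_n-1)$ from \cite[Example~8.15]{Kollar97}. Your proposed computation at $p_{n+1}$ via \Cref{thm:mldcomp} is a different and harder route, and you have not indicated why it would yield the stated constant.
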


\begin{proof}
     To show that $X$ is exceptional, it is equivalent to show that the $\alpha$-invariant of $X$ is greater than $1$. We now aim to show that there exists $\nu_n>1$ such that the pair $(X,\nu_nD)$ is lc for every effective $\mathbb{Q}$-divisor $D\sim_{\mathbb{Q}}-K_X$. More precisely, we will show that if $\nu_n \leq a_n a_{n+1}/d$, then $(X,\nu_nD)$ is lc for every effective $\mathbb{Q}$-divisor $D\sim_{\mathbb{Q}}-K_X$.

    Since $D\sim_{\mathbb{Q}}-K_X$ and $-K_X=\mathcal{O}(1)$, we have $\mathrm{deg}(D)=d/(a_0\cdots a_{n+1})$. Note that $a_0\geq \cdots \geq a_{n-1}\geq a_{n+1}\geq a_n$. Then by \Cref{multi-bound}, the multiplicity of $D$ at every point is at most $a_0\cdots a_{n-1}\mathrm{deg}(D)=d/(a_n a_{n+1})$, where $a_0\cdots a_{n-1}$ is the product of the $n$ largest weights. If $\nu_n\leq a_n a_{n+1}/d$, then $\nu_nD$ has multiplicity at every point at most $\nu_n d/(a_n a_{n+1})$ which is at most $1$. By Theorem \ref{claim2.10.4}, when 
\begin{equation}\label{nu_n,multiplicityX<=1}
\nu_n\leq \frac{a_n a_{n+1}}{d},
    \end{equation}
the pair $(X,\nu_nD)$ is lc at all smooth points of the stack $X$, hence at all points other than $[x_0:\cdots:x_n:x_{n+1}]=[0:\cdots:0:1].$ Note that $a_n a_{n+1}/d\sim s_{n}/4$ since $a_n\sim s_n^2/4$, $a_{n+1}\sim s_n a_n/2 \sim s_n^3/8$ and $d\sim a_{n+1}s_n \sim s_{n}^4/8$. In particular, $a_n a_{n+1}/d > 1$ for all even $n \geq 2$.

In the affine chart $x_{n+1}=1$, $X$ is the hypersurface of $\mathbb{A}^{n+1}$ given by $x_0^2+x_1^3+\cdots+x_{n-1}^{s_{n-1}}+x_n^b+x_1\cdots x_n=0$. We will show that $(X,\nu_nD)$ is lc near the origin $(0,\ldots,0)$ for any $\nu_n \leq a_n a_{n+1}/d$. By \cite[Lemma 4.4]{Totaro23}, we get $$\mathrm{mult}_a{D}\leq a_{n+1}\mathrm{deg}(D)=\frac{d}{a_0\cdots a_n},$$
where $\mathrm{mult}_a{D}$ is the $a$-weighted multiplicity of $D$ at the origin in $\mathbb{A}^{n+1}$ with respect to the weights $a_0,\ldots,a_n$ for $x_0,\ldots,x_{n}$, respectively.

Let $r \coloneqq -1 + a_n + a_{n+1}$.
Let $\mathrm{mult}_u{D}$ be the $u$-weighted multiplicity of $D$ at the origin with respect to the weights $u_i \coloneqq a_i$ for $i=0,\ldots,n-1$, and $u_n \coloneqq (s_n+1)r/2$. Since $u_n>a_n$, we have $\mathrm{mult}_u{D}\leq \mathrm{mult}_a(D)\leq d/(a_0\cdots a_n)$. Let $b_i \coloneqq u_i/r$ for $i=0,\ldots,n$. Then $\mathrm{mult}_b{D}=r^{n-1}\mathrm{mult}_u{D}\leq r^{n-1}d/(a_0\cdots a_n)$.
Since $d=(s_n-1)r$ and $a_i=b_ir$ for $i=0,\ldots,n-1$, we get $\mathrm{mult}_b{D}\leq (s_n-1)/(b_0\cdots b_{n-1} a_n)$. 

Note that $b_i=(s_n-1)/s_i$ for $i=0,\ldots,n-1$, and $b_n=(s_n+1)/2$. Let $X^{c}$ be the hypersurface in $\mathbb{P}(b_0,\ldots,b_n)$ of degree $s_n-1$ defined by $x_0^2+x_1^3+\cdots+x_{n-1}^{s_{n-1}}+x_1\cdots x_{n}=0,$ which is the weighted tangent cone to the hypersurface $X\subset \mathbb{A}^{n+1}$ at the origin  with respect to the weights $b_0,\ldots,b_n$. The monomial $x_n^b$ has disappeared
since its degree in the weights $b_0,\ldots,b_n$ is larger than $s_n-1$. Because the klt property is preserved by finite quotients, Lemma \ref{even-klt-Newton} means $X^{c}$ is klt. %\cite[Theorem 2.3]{Totaro23}. 
 Denote the weighted tangent cone of $D \subset \mathbb{A}^{n+1}$ at the origin by $D^{c}$. Then $D^{c}$ is an effective $\mathbb{Q}$-divisor in $X^{c}$ and is $\mathbb{Q}$-linearly equivalent to a rational multiple of $\mathcal{O}_{X^c}(1)$ since $D$ is $\mathbb{Q}$-Cartier. By \cite[Lemma 4.1]{Totaro23}, if $(X^c,\nu_nD^c)$ is lc and $K_{X^c}+\nu_n D^c \sim_{\mathbb{Q}}\mathcal{O}_{X^c}(l)$ with $l\leq 0$, then $(X,\nu_nD)$ is lc near the origin.

We have $-K_{X^c}=\mathcal{O}_{X^c}(-(s_n-1)+\sum_{j=0}^nb_j)=\mathcal{O}_{X^c}((s_n-1)/2
)$ by the adjunction formula. Note that $l\leq 0$ is equivalent to 
$\nu_n\mathrm{deg}(D^c)\leq \mathrm{deg}(-K_{X^c})=(-K_{X^c})\cdot c_1(\mathcal{O}(1))^{n-2}=(s_n-1)/2 \cdot (s_n-1)/(b_0\cdots b_n).
$
That is to say, $l\leq 0$ iff 
$$\nu_n\leq\frac{(s_n-1)^2}{2b_0\cdots b_n\mathrm{deg}(D^c)}.$$
The right side of this inequality is at least $(s_n-1)a_n/(2b_n)$
since $\mathrm{deg}(D^c)=\mathrm{mult}_b{D}\leq (s_n-1)/(b_0\cdots b_{n-1} a_n)$.
Thus we have $l\leq 0$ when 
\begin{equation}\label{nu_n,l<=0}
 \nu_n\leq \frac{(s_n-1)a_n}{2b_n}=\frac{(s_n-1)a_n}{s_n+1},
 \end{equation}
where $(s_n-1)a_n/(2b_n) \sim s_n^2/4$ since $a_n\sim s_n^2/4$ and $b_n\sim s_n/2$. 

Now we want to find the inequality satisfied by $\nu_n$ so that $(X^c,\nu_nD^c)$ is lc. Note that $b_n>b_0>\cdots>b_{n-1}$. By \Cref{multi-bound}, since the dimension of $D^c$ is $n-2$, the multiplicity of $D^c$ at every point is at most $b_2\mathrm{deg}(D^c)$ when $n=2$, or $b_nb_0\cdots b_{n-3}\mathrm{deg}(D^c)$ when $n\geq 4$, where $b_nb_0\cdots b_{n-3}$ is the product of the $n-1$ largest weights.
Since $\mathrm{deg}(D^c)\leq (s_n-1)/(b_0\cdots b_{n-1} a_n)$, the multiplicity of $D^c$ at every point is at most $b_2(s_2-1)/(b_0b_1a_2)=4/13$ when $n=2$, or $b_n(s_n-1)/(b_{n-2}b_{n-1}a_n)$ when $n\geq 4$. Therefore, for each even integer $n\geq 2$, the multiplicity of $\nu_nD^c$ at every point is at most $1$ if 
\begin{equation}\label{nu_n,multiplicityD^c<=1}
 \nu_n\leq \frac{b_{n-2}b_{n-1}a_n}{b_n(s_n-1)}=\frac{2a_n(s_n-1)}{s_{n-2}s_{n-1}(s_n+1)}.
 \end{equation}
 
 We have $2a_n(s_n-1)/(s_{n-2}s_{n-1}(s_n+1))\sim s^5_{n-2}/2$ since $s_n-1\sim s_n \sim s_{n-1}^2\sim s_{n-2}^4$ and $a_n\sim s_n^2/4\sim s^8_{n-2}/4$.
Since the stack $X^c$ is a smooth outside the point $[x_0:\cdots :x_{n-1}:x_{n}]=[0:\cdots:0:1]$, by Theorem \ref{claim2.10.4},  $(X^c,\nu_nD^c)$ is lc outside this point if $\nu$ satisfies inequality \eqref{nu_n,multiplicityD^c<=1}.

Now we want to find the inequality satisfied by $\nu_n$ such that $(X^c,\nu_nD^c)$ is lc near the point $[x_0:\cdots:x_{n-1}:x_{n}]=[0:\cdots:0:1]$ in $\mathbb{P}(b_0,\ldots,b_n)$. 
In the affine coordinate chart $x_n=1$, the hypersurface $X^c$ is defined by $x_0^2+x_1^3+\cdots+x_{n-1}^{s_{n-1}}+x_1\cdots x_{n-1}=0$. Note that $b_i=c_i$ for $i=0,\ldots,n-1$, for the values of $c_i$ appearing in \cite[Lemma 5.1]{Totaro23}. Hence the pair $(X^c,\nu_nD^c)$ is lc near the point $[x_0:\cdots:x_{n-1}:x_{n}]=[0:\cdots:0:1]$ if
\begin{equation*}
    \nu_n\leq 
    \begin{dcases}
        \frac{1}{\mathrm{mult}_b{D^c}} \text{ for } n=2,\\
        \frac{2}{s_{n-1}^{n-2}(s_{n-1}+1)^2(s_{n-1}-1)^{n-4}\mathrm{mult}_b{D^c}} \text{ for } n\geq 4.
    \end{dcases}
\end{equation*}
We know $D^c$ corresponds to a cycle with codimension $2$ in this affine coordinate chart $\mathbb{A}^n$ and by \cite[Lemma 4.4]{Totaro23} $$\mathrm{mult}_b{D^c}\leq b_n \mathrm{deg}(D^c)=\frac{b_n(s_n-1)}{b_0\cdots b_{n-1}a_n}=\frac{s_n+1}{2(s_n-1)^{n-2}a_n}.$$
Thus $(X^c,\nu_nD^c)$ is lc near the point $[x_0:\cdots:x_{n-1}:x_{n}]=[0:\cdots:0:1]$ if \begin{equation}\label{nu_n,lcatpoint}
\nu_n\leq 
    \begin{dcases}
        \frac{2a_2}{s_2+1}=\frac{13}{4} \text{ for } n=2,\\
        \frac{4a_n(s_n-1)^{n-2}}{s_{n-1}^{n-2}(s_{n-1}+1)^2(s_{n-1}-1)^{n-4}(s_n+1)}\sim s_{n-1}^2 \text{ for } n\geq 4
    \end{dcases}
 \end{equation}
 since $4a_n(s_n-1)^{n-2}\sim 4\cdot s_{n-1}^4(s_n-1)^{n-2}/4=s_{n-1}^{2n}$ and $s_{n-1}^{n-2}(s_{n-1}+1)^2(s_{n-1}-1)^{n-4}(s_n+1)\sim s_{n-1}^{2n-4}s_{n-1}^2=s_{n-1}^{2n-2}$. 
Among all the right sides of inequalities \eqref{nu_n,multiplicityX<=1}, \eqref{nu_n,l<=0}, \eqref{nu_n,multiplicityD^c<=1} and \eqref{nu_n,lcatpoint}, the one in inequality \eqref{nu_n,multiplicityX<=1} is the smallest one and greater than $1$.  
We know $\alpha$-invariant of $X$ is the supremum of the real numbers $\nu_n$ such that $(X,\nu_nD)$ is lc for every effective $\mathbb{Q}-$divisor $D\sim_{\mathbb{Q}}-K_X$. Hence the $\alpha$-invariant  $\alpha(X)\geq a_n a_{n+1}/d>1$.

Now we show $\alpha(X)\leq (s_n-2)a_{n+1}/(s_n-1)$.
Indeed, there exists a $\mathbb{Q}$-divisor $D\in |-K_X|_{\mathbb{Q}}$ such that $\mathrm{lct}(X,D)=(s_n-2)a_{n+1}/(s_n-1)$. 
Let $D \coloneqq (1/a_{n+1})E$, where $E$ is the hyperplane section in $X$ defined by the equations $\{x_{n+1}=0, x_0^2+\cdots+x_{n-1}^{s_{n-1}}=0\}$
in $\mathbb{P}(a_0,\ldots,a_n)$. The stack $E$ has one singular point $\p_n \coloneqq [x_0:\cdots:x_{n-1}:x_n:x_{n+1}]=[0:\cdots:0:1:0]$, which is a smooth point of stack $X$. In the affine chart $x_n=1$, the stack $E$ is \'etale-locally isomorphic to the Fermat-type hypersurface singularity $x_0^2+\cdots+x_{n-1}^{s_{n-1}}=0$ in $\mathbb{A}^n$. By \cite[Example 8.15]{Kollar97}, the log canonical threshold of the pair $(X,E)$ at the point $\p_n$ is $\mathrm{min}\{1/2+\cdots+1/s_{n-1},1\}=(s_n-2)/(s_n-1)$. Hence the log canonical threshold of $(X, D)$ at $\p_n$ is $(s_n-2)a_{n+1}/(s_n-1)$. Since $\p_n$ is the only singular point in $E$, we get $\mathrm{lct}(X,D) =(s_n-2)a_{n+1}/(s_n-1)$. So $\alpha(X)\leq \mathrm{lct}(X,D) = (s_n-2)a_{n+1}/(s_n-1)$. We have $(s_n-2)a_{n+1}/(s_n-1) \sim s_n^3/8$ since $a_{n+1}\sim s_n a_n/2 \sim s_n^3/8$.
\end{proof}

\begin{theorem}\label{thm: exceptionality-odd}
    For every odd integer $n \geq 3$, the $n$-dimensional klt Fano variety $X$ defined in 
    \Cref{thm:hypersurface-odd} is exceptional. The Fano $n$-fold $X$ has $\alpha$-invariant $$1<\frac{a_n a_{n+1}}{d}\leq \alpha(X)\leq \frac{(s_n-2)a_{n+1}}{s_n-1},$$ where $a_n a_{n+1}/d\sim s_n/4$ and $(s_n-2)a_{n+1}/(s_n-1) \sim s_n^3/16$.
\end{theorem}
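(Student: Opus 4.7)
The plan is to follow the strategy of the proof of \Cref{thm: exceptionality}, substituting the odd-case formulas and invoking \Cref{odd-klt-Newton} in place of \Cref{even-klt-Newton}. First, one verifies the weight ordering $a_0 \geq \cdots \geq a_{n-1} \geq a_{n+1} \geq a_n$ directly from the definitions, using $a_n \sim s_n^2/4$ and $a_{n+1} \sim s_n^3/16$ asymptotically. For any effective $\mathbb{Q}$-divisor $D \sim_\mathbb{Q} -K_X$, \Cref{multi-bound} gives $\mathrm{mult}_p D \leq a_0 \cdots a_{n-1} \deg(D) = d/(a_n a_{n+1})$ at every point $p \in X$, and then \Cref{claim2.10.4} shows $(X, \nu_n D)$ is lc at every quasismooth point of $X$ whenever $\nu_n \leq a_n a_{n+1}/d$. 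This bound is asymptotic to $s_n/4$ and strictly greater than $1$ for every odd $n \geq 3$, so the remaining task is to check lc at the unique non-quasismooth point $\p_{n+1} \coloneqq [0:\cdots:0:1]$.

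To handle $\p_{n+1}$, I would work in the chart $x_{n+1}=1$, where $X$ is the quotient by $\mu_{a_{n+1}}$ of the hypersurface $\{x_0^2 + x_1^3 + \cdots + x_{n-1}^{s_{n-1}} + x_n^b + x_1 \cdots x_{n-1} x_n^2 = 0\}$ in $\mathbb{A}^{n+1}$. Choose weights $b_i \coloneqq (s_n-1)/s_i$ for $i = 0, \ldots, n-1$ and $b_n \coloneqq (s_n+1)/4$ (an integer since $s_n \equiv 3 \pmod 4$ for odd $n$), arranged so that every monomial of the equation except $x_n^b$ has weighted degree $s_n - 1$. The weighted tangent cone $X^c \subset \mathbb{P}(b_0, \ldots, b_n)$ is then cut out by $x_0^2 + x_1^3 + \cdots + x_{n-1}^{s_{n-1}} + x_1 \cdots x_{n-1} x_n^2 = 0$, which is klt by \Cref{odd-klt-Newton}. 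As in the even case, \cite[Lemmas 4.1, 4.4]{Totaro23} reduce the lc condition for $(X, \nu_n D)$ near $\p_{n+1}$ to checking that $(X^c, \nu_n D^c)$ is lc and that $K_{X^c} + \nu_n D^c \sim_\mathbb{Q} \mathcal{O}_{X^c}(l)$ with $l \leq 0$.

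Combining \Cref{multi-bound} with \Cref{claim2.10.4} handles lc of $(X^c, \nu_n D^c)$ at every point of $\mathbb{P}(b_0,\ldots,b_n)$ away from $[0:\cdots:0:1]$, and at that remaining point one passes to the chart $x_n = 1$, where $X^c$ is cut out by $x_0^2 + x_1^3 + \cdots + x_{n-1}^{s_{n-1}} + x_1 \cdots x_{n-1} = 0$, so that \cite[Lemma 5.1]{Totaro23} applies verbatim (the $x_n^2$ factor disappears upon setting $x_n = 1$). The numerical constraints on $\nu_n$ produced by these estimates and by $l \leq 0$ differ from the even-case inequalities \eqref{nu_n,l<=0}, \eqref{nu_n,multiplicityD^c<=1}, and \eqref{nu_n,lcatpoint} only through the substitution $b_n = (s_n+1)/4$ in place of $(s_n+1)/2$, which lowers the right-hand sides by bounded factors but keeps them well above the target $s_n/4$. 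Thus every inequality is satisfied at $\nu_n = a_n a_{n+1}/d$, yielding $\alpha(X) \geq a_n a_{n+1}/d > 1$.

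For the upper bound, take $D \coloneqq (1/a_{n+1}) E$, where $E = X \cap \{x_{n+1} = 0\}$ is cut out on $X$, as a substack of $\mathbb{P}(a_0,\ldots,a_n)$, by $x_0^2 + x_1^3 + \cdots + x_{n-1}^{s_{n-1}} = 0$; since $E \sim -a_{n+1} K_X$ we have $D \sim_\mathbb{Q} -K_X$. The only singular point of $E$ is $\p_n \coloneqq [0:\cdots:0:1:0]$, which is a smooth point of $X$; in the chart $x_n = 1$, $E$ is \'etale-locally the Fermat hypersurface $\{x_0^2 + x_1^3 + \cdots + x_{n-1}^{s_{n-1}} = 0\} \subset \mathbb{A}^n$, whose log canonical threshold is $\min\{1/s_0 + \cdots + 1/s_{n-1}, 1\} = (s_n - 2)/(s_n - 1)$ by \cite[Example 8.15]{Kollar97} together with the Sylvester identity. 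Hence $\mathrm{lct}(X, D) = (s_n - 2) a_{n+1}/(s_n - 1) \sim s_n^3/16$, giving the claimed upper bound. The main obstacle is the arithmetic bookkeeping near $\p_{n+1}$: tracking how both the $x_n^2$ (rather than $x_n$) in the cross term and the halving of $b_n$ propagate through the estimates requires care, but no genuinely new idea beyond the even case is needed.
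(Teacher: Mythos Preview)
Your proposal is correct and follows essentially the same route as the paper's proof: the same multiplicity bound at quasismooth points, the same weighted tangent cone $X^c$ with $b_n = (s_n+1)/4$, the same reduction via \cite[Lemmas 4.1, 4.4, 5.1]{Totaro23}, and the same divisor $D = (1/a_{n+1})E$ for the upper bound. One small imprecision: the substitution $b_n = (s_n+1)/4$ actually \emph{raises} rather than lowers most of the right-hand sides (and the $l\le 0$ bound becomes $(s_n-3)a_n/(s_n+1)$ because $-K_{X^c}=\mathcal{O}_{X^c}((s_n-3)/4)$, not just through $b_n$), and for $n=3$ the weight ordering is $b_0>b_1>b_n>b_2$, so the two largest weights in \Cref{multi-bound} are $b_0,b_1$ rather than involving $b_n$; none of this affects your conclusion, since all bounds remain comfortably above $a_n a_{n+1}/d \sim s_n/4$.
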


\begin{proof}
To show $X$ is exceptional, it is equivalent to show that the $\alpha$-invariant of $X$ is greater than $1$. We now aim to show that there exists $\nu_n>1$ such that the pair $(X,\nu_nD)$ is lc for every effective $\mathbb{Q}$-divisor $D\sim_{\mathbb{Q}}-K_X$. In particular, we can show that if $\nu_n \leq a_n a_{n+1}/d$, then $(X,\nu_nD)$ is lc for every effective $\mathbb{Q}$-divisor $D\sim_{\mathbb{Q}}-K_X$.

    Since $D\sim_{\mathbb{Q}}-K_X$ and $-K_X=\mathcal{O}(1)$, we have $\mathrm{deg}(D)=d/(a_0\cdots a_{n+1})$. Note that $a_0\geq \cdots \geq a_{n-1}\geq a_{n+1}\geq a_n$. Then by \Cref{multi-bound}, the multiplicity of $D$ at every point is at most $a_0\cdots a_{n-1}\mathrm{deg}(D)=d/(a_n a_{n+1})$, where $a_0\cdots a_{n-1}$ is the product of the $n$ largest weights. If $\nu_n\leq a_n a_{n+1}/d$, then $\nu_nD$ has multiplicity at every point at most $\nu_n d/(a_n a_{n+1})$ which is at most $1$. By Theorem \ref{claim2.10.4}, when 
\begin{equation}\label{nu_n,multiplicityX<=1,odd}
\nu_n\leq \frac{a_n a_{n+1}}{d},
    \end{equation}
the pair $(X,\nu_nD)$ is lc at all smooth points of the stack $X$, hence at all points other than $[x_0:\cdots:x_n:x_{n+1}]=[0:\cdots:0:1].$ Note that $(a_n a_{n+1})/d\sim s_n/4$ since $a_n\sim s_n^2/4$, $a_{n+1}\sim (s_n a_n)/4 \sim s_n^3/16$ and $d\sim a_{n+1}s_n \sim s_{n}^4/16$. In particular, $a_n a_{n+1}/d > 1$ for all odd integers $n \geq 3$.

In the affine chart $x_{n+1}=1$, the hypersurface $X\subset \mathbb{A}^{n+1}$ is given by $x_0^2+x_1^3+\cdots+x_{n-1}^{s_{n-1}}+x_n^b+x_1\cdots x_n^2=0$. We will show that $(X,\nu_nD)$ is lc near the origin $(0,\ldots,0)$ for any $\nu_n \leq a_n a_{n+1}/d$. By \cite[Lemma 4.4]{Totaro23}, we get $\mathrm{mult}_a{D}\leq a_{n+1}\mathrm{deg}(D)=d/(a_0\cdots a_n)$,
where $\mathrm{mult}_a{D}$ is the $a$-weighted multiplicity of $D$ at the origin in $\mathbb{A}^{n+1}$ with respect to the weights $a_0,\ldots,a_n$ for $x_0,\ldots,x_{n}$ respectively.

Let $r \coloneqq -1 + a_n + a_{n+1}$.
Let $\mathrm{mult}_u{D}$ be the $u$-weighted multiplicity of $D$ at the origin with respect to the weights $u_i \coloneqq a_i$ for $i=0,\ldots,n-1$, and $u_n \coloneqq (s_n+1)r/4$. Since $u_n>a_n$, we have $\mathrm{mult}_u{D}\leq \mathrm{mult}_a(D)\leq d/(a_0\cdots a_n)$. Let $b_i \coloneqq u_i/r$ for $i=0,\ldots,n$. Then $\mathrm{mult}_b{D}=r^{n-1}\mathrm{mult}_u{D}\leq r^{n-1}d/(a_0\cdots a_n)$.
Since $d=(s_n-1)r$ and $a_i=b_ir$ for $i=0,\ldots,n-1$, we get $\mathrm{mult}_b{D}\leq (s_n-1)/(b_0\cdots b_{n-1} a_n)$.

Note that $b_i=(s_n-1)/s_i$ for $i=0,\ldots,n-1$, and $b_n=(s_n+1)/4$. Let $X^{c}$ be the hypersurface in $\mathbb{P}(b_0,\ldots,b_n)$ of degree $s_n-1$ defined by $x_0^2+x_1^3+\cdots+x_{n-1}^{s_{n-1}}+x_1\cdots x_{n-1}x^2_{n}=0,$ which is the weighted tangent cone to the hypersurface $X\subset \mathbb{A}^{n+1}$ at the origin  with respect to the weights $b_0,\ldots,b_n$. The monomial $x_n^b$ has disappeared
since its degree in the weights $b_0,\ldots,b_n$ is larger than $s_n-1$. Because the klt property is preserved by finite quotients, Lemma \ref{odd-klt-Newton} means $X^{c}$ is klt. 
Denote the weighted tangent cone of $D\in \mathbb{A}^{n+1}$ at the origin by $D^{c}$. Then $D^{c}$ is an effective $\mathbb{Q}$-divisor in $X^{c}$ and is $\mathbb{Q}$-linearly equivalent to a rational multiple of $\mathcal{O}_{X^c}(1)$ since $D$ is $\mathbb{Q}$-Cartier. By \cite[Lemma 4.1]{Totaro23}, if $(X^c,\nu_nD^c)$ is lc and $K_{X^c}+\nu_n D^c \sim_{\mathbb{Q}}\mathcal{O}_{X^c}(l)$ with $l\leq 0$, then $(X,\nu_nD)$ is lc near the origin.

We have $-K_{X^c}=\mathcal{O}_{X^c}(-(s_n-1)+\sum_{j=0}^nb_j)=\mathcal{O}_{X^c}((s_n-3)/4
)$ by adjunction formula. Note that $l\leq 0$ is equivalent to 
$\nu_n\mathrm{deg}(D^c)\leq \mathrm{deg}(-K_{X^c})=(-K_{X^c})\cdot c_1(\mathcal{O}(1))^{n-2}=(s_n-3)/4\cdot (s_n-1)/(b_0\cdots b_n).
$
That is to say, $l\leq 0$ iff $$\nu_n\leq\frac{(s_n-3)(s_n-1)}{4b_0\cdots b_n\mathrm{deg}(D^c)}.$$
 The left side of this inequality is at least $(s_n-3)a_n/(4b_n)$
since $\mathrm{deg}(D^c)=\mathrm{mult}_b{D}\leq (s_n-1)/(b_0\cdots b_{n-1} a_n)$.
Thus we have $l\leq 0$ when 
\begin{equation}\label{nu_n,l<=0,odd}
 \nu_n\leq \frac{(s_n-3)a_n}{4b_n}=\frac{(s_n-3)a_n}{s_n+1},
 \end{equation}
where $(s_n-3)a_n/(s_n+1) \sim s_n^2/4$ since $a_n \sim s_n^2/4$. 

Now we want to find the inequality satisfied by $\nu_n$ so that $(X^c,\nu_nD^c)$ is lc. Note that $b_0>b_1>b_n>b_2>\cdots>b_{n-1}$. By \Cref{multi-bound}, since dimension of $D^c$ is $n-2$, the multiplicity of $D^c$ at every point is at most $b_0b_1\mathrm{deg}(D^c)$ when $n=3$, or $b_{n}b_0b_1\cdots b_{n-3}\mathrm{deg}(D^c)$ when $n\geq 5$, where $b_nb_0\cdots b_{n-3}$ is the product of the $n-1$ largest weights..
Since $\mathrm{deg}(D^c)\leq (s_n-1)/(b_0\cdots b_{n-1} a_n)$, the multiplicity of $D^c$ at every point is at most $(s_3-1)/(b_2a_3)=7/493$ when $n=3$, or $(b_n(s_n-1))/(b_{n-2}b_{n-1}a_n)$ when $n\geq 5$. Therefore, the multiplicity of $\nu_nD^c$ at every point is at most $1$ if 
\begin{equation}\label{nu_n,multiplicityD^c<=1,odd}
\nu_n\leq 
    \begin{dcases}
        \frac{493}{7} \text{ for } n=3,\\
        \frac{b_{n-2}b_{n-1}a_n}{b_n(s_n-1)}=\frac{4a_n(s_n-1)}{s_{n-2}s_{n-1}(s_n+1)} \text{ for } n\geq 5
    \end{dcases}
 \end{equation}
 We have $4a_n(s_n-1)/(s_{n-2}s_{n-1}(s_n+1)) \sim s_n^2/(s_{n-2}s_{n-1}) \sim s^5_{n-2}$ since $s_n-1\sim s_n\sim s^2_{n-1}\sim s_{n-2}^4$ and $a_n\sim s_n^2/4\sim s_{n-2}^8/4$.
Since the stack $X^c$ is a smooth outside the point $[x_0:\cdots:x_{n-1}:x_{n}]=[0:\cdots:0:1]$, then by Theorem \ref{claim2.10.4},  $((X^c,\nu_nD^c))$ is lc outside this point if $\nu$ satisfies inequality \eqref{nu_n,multiplicityD^c<=1}.

Now we want to find the inequality satisfied by $\nu_n$ such that $(X^c,\nu_nD^c)$ is lc near the point $[x_0:\cdots:x_{n}]=[0:\cdots:0:1]$ in $\mathbb{P}(b_0,\ldots,b_n)$. 
In the affine coordinate chart $x_n=1$, the hypersurface $X^c$ is defined by $x_0^2+x_1^3+\cdots+x_{n-1}^{s_{n-1}}+x_1\cdots x_{n-1}=0$. Note that $b_i=c_i$ for $i=0,\ldots,n-1$, for the values of $c_i$ appearing in \cite[Lemma 5.1]{Totaro23}. Hence the pair $(X^c,\nu_nD^c)$ is lc near the point $[x_0:\cdots:x_{n-1}:x_{n}]=[0:\cdots:0:1]$ if
\begin{equation*}
    \nu_n\leq 
    \frac{2}{s_{n-1}^{n-2}(s_{n-1}+1)^2(s_{n-1}-1)^{n-4}\mathrm{mult}_b{D^c}} \text{ for } n\geq 3.
   \end{equation*}
We know $D^c$ corresponds to a cycle with codimension $2$ in this affine coordinate chart $\mathbb{A}^n$ and by \cite[Lemma 4.4]{Totaro23} $$\mathrm{mult}_b{D^c}\leq b_n \mathrm{deg}(D^c)=\frac{b_n(s_n-1)}{b_0\cdots b_{n-1}a_n}=\frac{s_n+1}{4(s_n-1)^{n-2}a_n}.$$
Thus $(X^c,\nu_nD^c)$ is lc near the point $[x_0:\cdots:x_{n}]=[0:\cdots:0:1]$ if
\begin{equation}\label{nu_n,lcatpoint,odd}
\nu_n\leq \frac{8a_n(s_n-1)^{n-2}}{s_{n-1}^{n-2}(s_{n-1}+1)^2(s_{n-1}-1)^{n-4}(s_n+1)}\sim 2s_{n-1}^2 \text{ for } n\geq 3
    \end{equation}
 since $8a_n(s_n-1)^{n-2}\sim 8 s_{n-1}^4(s_n-1)^{n-2}/{4} = 2s_{n-1}^{2n}$ and $s_{n-1}^{n-2}(s_{n-1}+1)^2(s_{n-1}-1)^{n-4}(s_n+1)\sim s_{n-1}^{2n-4}s_{n-1}^2=s_{n-1}^{2n-2}$. 
Among all the right sides of inequalities \eqref{nu_n,multiplicityX<=1,odd}, \eqref{nu_n,l<=0,odd}, \eqref{nu_n,multiplicityD^c<=1,odd} and \eqref{nu_n,lcatpoint,odd}, the one in inequality \eqref{nu_n,multiplicityX<=1,odd} is the smallest one and greater than $1$. 
We know $\alpha$-invariant of $X$ is the supremum of the real numbers $\nu_n$ such that $(X,\nu_nD)$ is lc for every effective $\mathbb{Q}-$divisor $D\sim_{\mathbb{Q}}-K_X$. Hence the $\alpha$-invariant  $\alpha(X)\geq (a_n a_{n+1})/d>1$.

Now we show $\alpha(X)\leq (s_n-2)a_{n+1}/(s_n-1)$.
Indeed, there exists a $\mathbb{Q}$-divisor $D\in |-K_X|_{\mathbb{Q}}$ such that $\mathrm{lct}(X,D)=(s_n-2)a_{n+1}/(s_n-1)$. 
Let $D \coloneqq (1/a_{n+1})E$, where $E$ is the hyperplane section in $X$ given by $\{x_{n+1}=0, x_0^2+\cdots+x_{n-1}^{s_{n-1}}=0\}$. The stack $E$ has one singular point $\p_n \coloneqq  [x_0:\cdots:x_{n-1}:x_n:x_{n+1}]=[0:\cdots:0:1:0]$ which is a smooth point of stack $X$. In the affine chart $x_n=1$, the stack $E$ is \'etale-locally isomorphic to the Fermat-type hypersurface singularity $x_0^2+\cdots+x_{n-1}^{s_{n-1}}=0$ in $\mathbb{A}^n$. By \cite[Example 8.15]{Kollar97}, the log canonical threshold of the pair $(X,E)$ at the point $\p_n$ is $\mathrm{min}\{1/2 +\cdots+1/s_{n-1},1\}=(s_n-2)/(s_n-1)$. Hence the log canonical threshold of $(X, D)$ at $\p_n$ is $(s_n-2)a_{n+1}/(s_n-1)$. Since $\p_n$ is the only singular point in $E$, we get $\mathrm{lct}(X,D) = (s_n-2)a_{n+1})/(s_n-1)$. So $\alpha(X)\leq \mathrm{lct}(X,D) = (s_n-2)a_{n+1}/(s_n-1)$. We have $(s_n-2)a_{n+1}/(s_n-1)\sim s_n^3/16$ since $a_{n+1}\sim s_n a_n/4\sim s_n^3/16$.
\end{proof}

\begin{remark}
When $n=2$, our example is the hypersurface $X_{282} \subset \mathbb{P}(141,94,13,35)$ defined by the equation
$$x_0^2 + x_1^3 + x_2^{19} x_3 + x_1 x_2 x_3^5 = 0.$$
Let $C$ be the curve in $X_{282}$ defined by $\{x_2=0, x_0^2+x_1^3=0\}$. By taking a resolution, we can compute $\mathrm{lct}(X,C)=3/4$. Hence for $D=(1/13)C \in |-K_X|_{\mathbb{Q}}$, we have $\mathrm{lct}(X,D)=39/4$, which gives a sharper upper bound for $\alpha(X)$ than in \Cref{thm: exceptionality}. This upper bound is smaller than the $\alpha$-invariant of $21/2$ achieved by another exceptional del Pezzo surface constructed by Totaro \cite[Section 8]{Totaro23}, which he conjectures to be optimal. That example is another non-quasismooth hypersurface $X_{154}\subset \mathbb{P}(77,45,19,14)$.

Unlike in the small anti-canonical volume examples from \cite{Totaro23}, the divisor computing the $\alpha$-invariant in our examples likely passes through
the non-quasismooth point, and the exact value of the invariant seems to be very difficult to compute in higher dimensions.  Since our examples do not appear to have optimal $\alpha$-invariant (this is true for $n = 2$, for instance, from the last paragraph), we only demonstrate bounds on $\alpha(X)$ in this section.
\end{remark}

\end{document}